\newif\ifpdf
\providecommand{\1}{\mathbf{1}}
\newcommand{\ignore}[1]{}
\newcommand{\mfalls}{\quad\mbox{if \;}}
\renewcommand{\cases}[1]{\left\{\begin{array}{rl}#1\end{array}\right.}
\newcommand{\mbu}{\quad\mbox{ and }\quad}
\newcommand{\mbs}[1]{\mbox{ \;#1\; }}
\newcommand{\mf}{\quad\mbox{\;for \;}}
\newcommand{\mfa}{\quad\mbox{\;for all \;}}
\renewcommand{\P}{\mathbf{P}}
\newcommand{\E}{\mathbf{E}}
\newcommand{\Var}{\mathbf{Var}}
\def\given{\hspace{0.8pt}|\hspace{0.8pt}}
\def\Given{\hspace{1.0pt}\big|\hspace{1.0pt}}
\newcommand{\R}{{\mathbb{R}}}
\newcommand{\N}{{\mathbb{N}}}
\newcommand{\EE}{\mathbb{E}}
\newcommand{\PP}{\mathbb{P}}
\newcommand{\C}{{\mathbb{C}}}
\newcommand{\Z}{{\mathbb{Z}}}
\newcommand{\ve}{\varepsilon}
\newcommand{\bec}{\begin{equation}}
\newcommand{\eec}{\end{equation}}
\newcommand{\bac}{\begin{eqnarray}}
\newcommand{\eac}{\end{eqnarray}}
\newcommand{\be}{\begin{displaymath}}
\newcommand{\ee}{\end{displaymath}}
\newcommand{\ba}{\begin{eqnarray*}}
\newcommand{\ea}{\end{eqnarray*}}
\newcommand{\nc}{\nonumber \\}
\newcommand{\equ}[1]{(\ref{#1})}
\newtheorem{proposition}{Proposition}[section]
\newtheorem{theorem}[proposition]{Theorem}
\newtheorem{lemma}[proposition]{Lemma}
\newtheorem{corollary}[proposition]{Corollary}
\newtheorem{uremark}[proposition]{Remark}
\newenvironment{remark}{\begin{uremark}\em}{\hfill$\Diamond$\end{uremark}}
\newtheorem{uexample}[proposition]{Example}
\def\qed{\mbox{$\Box$}}
\def\boxp{\mbox{$\Box$}}
\newenvironment{Proof}{\par\noindent{\bf Proof.\ }}{}
\newenvironment{proof}{\par\noindent{\bf Proof.\ }}{\hfill\qed\\ }
\newtheorem{udefi}{Definition}[section]
\newtheorem{utheo}{Theorem}
\newtheorem{usatz}[udefi]{Satz}
\newtheorem{uprop}[udefi]{Proposition}
\newtheorem{ubemerkung}[udefi]{Bemerkung}
\newtheorem{ukorollar}[udefi]{Korollar}
\newenvironment{beweis}{\subsubsection*{Beweis}}{\qed }
\def\Re{\mathrm{Re}}
\def\CL{\mathcal{L}}
\definecolor{Red}{rgb}{1,0,0}
\newcounter{nump}\setcounter{nump}{0}
\newcounter{numb}\setcounter{numb}{0}
\newcounter{numu}\setcounter{numu}{0}
\def\boxp{\addtocounter{nump}{1}\ensuremath{\Box}}
\def\boxu{\addtocounter{numu}{1}\ensuremath{\clubsuit}}
\def\qed{\hfill\phantom{\boxp}\addtocounter{nump}{-1}\hspace*{-1em}\hfill\boxp}
\def\eope{\tag*{\boxp}}
\def\eoue{\tag*{\boxu}\]\end{Ueb}}
\def\bbw{\begin{beweis}}
\newcommand{\bbwc}[1]{\begin{beweis}\textbf{\!\!(#1)}\quad}
\def\ebwn{\end{beweis}}
\def\ebw{\qed\end{beweis}}
\begin{document}
\title{The tail of the length of an excursion in a trap of random size}
\author{
Nina Gantert\\
Fakult\"{a}t f\"{u}r Mathematik\\
Technische Universit\"{a}t M\"{u}nchen\\
Boltzmannstr. 3\\
85748 Garching\\
Germany\\
gantert@ma.tum.de
\and Achim Klenke\\
Institut f\"{u}r Mathematik\\
Johannes Gutenberg-Universit\"{a}t Mainz\\
Staudingerweg 9\\
55099 Mainz\\Germany\\
math@aklenke.de}

\date{\small submitted 04.11.2021\\version of 05.07.2022}

\maketitle
\begin{abstract}
Consider a random walk with a drift to the right on $\{0,\ldots,k\}$ where $k$ is random and geometrically distributed. We show that the tail $\PP[T>t]$ of the length $T$ of an excursion from $0$ decreases up to constants like $t^{-\varrho}$ for some $\varrho>0$ but is not regularly varying. We compute the oscillations of $t^\varrho\,\PP[T>t]$ as $t\to\infty$ explicitly.
\end{abstract}
\section{Introduction and Main Result}
\label{S1}
\subsection{Introduction}
\label{S1.1}
In this paper, we study a simple object: the tail of the time a biased random walk spends in a trap of random size. Our result is very explicit and may serve as a building block in the study of trapping models.
Trapping phenomena for biased walks have been investigated intensively over the last decade, we refer to \cite{GBAFri} for a survey.
As a model for transport in an inhomogeneous medium, one can study biased random walk on a supercritical percolation cluster on $\Z^d$ for $d\geq 2$.
It turns out that for small values of the bias, the walk moves at a positive linear speed, whereas for large values of the bias, the speed vanishes. The critical value of the bias separating the two regimes is the value where the expectation of the time spent in a trap changes from being finite to being infinite.
This model goes back to \cite{BarmaDhar} and was investigated in \cite{BGP} and \cite{SznitPerc}. Finally, Alexander Fribergh and Alan Hammond proved a sharp transition for the positivity of the speed in \cite{FriHam}.
Concerning limit laws for the distribution of the walker, a central limit theorem for small bias was proved in \cite{SznitPerc}.
The law of the walker in the subballistic case was addressed by \cite{FriHam}: the authors find the polynomial order of the distance of the walker to the origin.
It is conjectured that it depends on the spatial direction of the bias if there is a limit law for the distance of the walker to the origin.

Replacing the integer lattice with a tree yields a biased random walk on a supercritical Galton-Watson tree. In this case, the phase transition for the bias is easier to understand and was shown in \cite{LPP}.
It turns out that the distance of the walker to the origin does not satisfy a limit law, but there are subsequences converging to certain infinitely divisible laws, see \cite{BFGH}. The crucial object is the time $T$ spent in traps (averaged over the size of the trap): since the tail of this random variable is not in the domain of attraction of a stable law, there is no limit law for the time the walker needs to go at a certain distance of the origin.
We refer to the introduction of \cite{BFGH} for more explanations. If one randomizes the bias, the situation changes, see \cite{GBAHa} and \cite{Ham}.
For one-dimensional random walk in random environment, limit laws for the distance of the walker to the origin have been proved in \cite{KKS} under a non-lattice assumption. If the non-lattice assumption is violated, one would expect convergence of subsequences as for the aforementioned biased random walk on a Galton-Watson tree.
The result of this paper can be used to confirm this in the simple case of an environment which has either a drift to the left or a reflection to the right, treated in \cite{Solomon1975} and \cite{Ga}.

As a toy model for the supercritical percolation cluster, one may consider a percolation on a ladder graph, conditioned to survive.
This model was introduced in \cite{AxHae} and further investigated by \cite{GMM1, GMM2, MeiLue}.
Again, our result may be applied to show that there is no limit law for the distance of the walker to the origin, as conjectured in \cite{MeiLue}.

There is a well-known connection between hitting times of a random walk (or random walk in random environment) and the total population size in a branching process (or branching process in random environment) with geometric offspring laws.
For subcritical branching processes in random environment (BPRE), a precise asymptotics for the tail of the total population size under a non-lattice assumption was given in \cite{Afa}. See also \cite{DPZ} for an upper bound on the same tail without non-lattice assumption.
Again, our result can serve as an example that the precise asymptotics fails in the lattice case, at least in a particular case of a degenerate environment.
More precisely,
consider a subcritical BPRE where in each generation the law of the offspring is either geometric with expectation $>1$ or the Dirac measure at $0$.
Denote by $T$ the total population size in this BPRE. Then, while the probability $\PP[T > t]$ satisfies, for positive constants $c_1$ and $c_2$ and a certain exponent $\varrho$,
\begin{equation}
\label{E1.01}
c_1t^{-\varrho}\leq \PP[T > t] \leq c_2t^{-\varrho},
\end{equation}
 it is not regularly varying. More precisely, we show that $ \PP[T > t] t^{\varrho}$ is asymptotically equivalent to a nonconstant, multiplicatively periodic function, see \equ{E1.10}.
In our setup, with $T$ denoting the time spent in a trap of random size, \eqref{E1.01} was proved in \cite{MeiLue} and it was conjectured that the tail is not regularly varying. This is confirmed by our result.
Similar tail asymptotics for various quantities are known in the context of branching processes, see for instance
\cite{Vatutin1974}, \cite{BigginsBingham1991}, \cite{BigginsNadarajah1993}.

\subsection{Main result}
\label{S1.2}
Let us now give precise definitions and state our main result, Theorem~\ref{T1.01}.
Let $\beta>1$ be a fixed parameter.
Let $k\in\N_0$ and consider discrete time random walk $X$ on $\{0,\ldots,k\}$ with edge weight $C(l,l+1)=\beta^l$ along the edge $\{l,l+1\}$ and started in $X_0=0$. That is, if $X_n=l\in\{1,\ldots,k-1\}$ then it jumps to $l+1$ with probability $\beta/(1+\beta)$ and to $l-1$ with probability $1/(1+\beta)$. There is reflection at the boundaries: If $X_n=0$, then it jumps to $1$. If $X_n=k$, then it jumps to $k-1$. Of course, for $k=0$, the random walk is trivial. Let $\P_k$ denote the probabilities with respect to fixed $k$ and let $\PP$ denote the probabilities with respect to a random geometrically distributed $k$ with parameter $1-\alpha$, that is,
\begin{equation}
\label{E1.02}
\PP=\sum_{k=0}^\infty(1-\alpha)\alpha^k\,\P_k.
\end{equation}
Also let $\E_k$ and $\EE$ be the corresponding expectations, respectively.
Here $\alpha\in(0,1)$ is a fixed parameter.
Let
\begin{equation}
\label{E1.03}
T:=\inf\big\{t>0:\,X_t=0\big\}\mfalls k\geq1
\end{equation}
and $T=0$ if $k=0$, be the length of an excursion from $0$.
Let \begin{equation}
\label{E1.04}
\varrho:=-\frac{\log(\alpha)}{\log(\beta)}.
\end{equation}
Our random walk $X$ is a special case of a random walk in an irreducible electrical network, see, e.g., \cite[Chapter 19]{Klenke2020}, on a finite graph $(V,E)$ with edge weights $C(e)$, $e\in E$. Denote by $C(x)$ the sum of $C(e)$ for all edges incident to the vertex $x\in V$, and let $C:=\sum_xC(x)$. The transition probabilities are given by $p(x,y)=C(\{x,y\})/C(x)$, $x,y\in V$. It is easy to check that $\pi(x):=C(x)/C$ defines the unique invariant measure. By \cite[Theorem 17.52]{Klenke2020}, the expected time to return to $x$ (when started in $x$) equals $1/\pi(x)=C/C(x)$.

We use this fact to compute, for fixed $k$ the expectation of $T$:
\begin{equation}
\label{E1.05}
\E_k[T]=\frac{2}{C(0,1)}\sum_{l=0}^{k-1}C(l,l+1)=2\sum_{l=0}^{k-1}\beta^l=2\frac{\beta^k-1}{\beta-1}.
\end{equation}
Hence
\begin{equation}
\label{E1.06}
\EE[T]=(1-\alpha)\sum_{k=1}^\infty\alpha^k\,2\,\frac{\beta^k-1}{\beta-1}
=\cases{
\frac{2\alpha}{1-\alpha\beta}<\infty,&\mfalls \varrho>1,\\[2mm]
\infty,&\mfalls\varrho\leq 1.
}
\end{equation}
A similar but more involved computation shows that
\begin{equation}
\label{E1.07}
\EE\big[T^2\big]<\infty\mbs{if and only if}\varrho>2.
\end{equation}

In order to describe the tail of $T$, we introduce the function $g$ defined by
\begin{equation}
\label{E1.08}
g(t):=\frac{\beta-1}{\beta}\frac{(1-\alpha)\Gamma(\varrho)}{\log(\beta)}\,\left(\frac{2\beta}{(\beta-1)^2}\right)^\varrho\left[1+\sum_{\ell=1}^\infty c_\ell\cos\left(2\pi \ell\frac{\log(t)}{\log(\beta)}-d_\ell\right)\right]
\end{equation}
with
\begin{equation}
\label{E1.09}
c_\ell=2\frac{\left|\Gamma\left(\varrho+\frac{2\pi i\,\ell}{\log(\beta)}\right)\right|}{\Gamma(\varrho)}\mbu
d_\ell=\arg\left(\Gamma\left(\varrho+\frac{2\pi i\,\ell}{\log(\beta)}\right)\right).
\end{equation}
Here, $\Gamma$ is Euler's Gamma function and $\arg(a+bi)\in(-\pi/2,\pi/2)$ denotes the angle of $a+bi$ for $a>0$ and $b\in\R$.
Note that the $c_\ell$ decrease quickly with $\ell$ and hence the constant and the $\ell=1$ mode are dominant.

Note that $g$ is a nonconstant multiplicatively periodic function, that is
\begin{equation}
\label{gperiodic}
g(\beta t)=g(t)\mfa t>0.
\end{equation}
In particular, $g$ is not slowly varying.
\begin{theorem}
\label{T1.01}
For $g$ defined in \equ{E1.08}, we have
\begin{equation}
\label{E1.10}
\lim_{t\to\infty}\frac{ t^{\varrho}\,\PP[T>t]}{g\left(\frac{(\beta-1)^2}{2\beta}t\right)}=1\, .
\end{equation}
\end{theorem}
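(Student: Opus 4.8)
The engine is a self‑similar identity for $T$, which turns the computation into the iteration of a Möbius map in the generating variable, followed by a harmonic‑sum/Mellin analysis that produces exactly the Gamma–functions in \eqref{E1.09}.

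\textbf{Step 1: a recursion for $T$.} Write $T^{(k)}$ for $T$ when the trap has size $k$ and set $p=\beta/(1+\beta)$, $q=1/(1+\beta)$, so $p/q=\beta$. Decomposing the excursion at successive visits to state $1$ (each sub‑excursion to the right of $1$ is, after shifting down by one, an excursion in a trap of size $k-1$) and using the strong Markov property gives
\[
T^{(k)}\ \overset{d}{=}\ 2+\sum_{i=1}^{N-1}T^{(k-1)}_i,
\]
where $\P[N-1=m]=p^{m}q$, the $T^{(k-1)}_i$ are i.i.d.\ copies of $T^{(k-1)}$, and $N$ is independent of them. For $f_k(z):=\E_k[z^{T}]$ this is the Möbius iteration $f_k(z)=qz^{2}/(1-p\,f_{k-1}(z))=:\phi_z(f_{k-1}(z))$ with $f_0\equiv1$. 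Its fixed points are $w_\pm(z)=\frac{1\pm\sqrt{1-4pqz^{2}}}{2p}$, the multiplier at the attracting fixed point $w_-$ is $\mu(z)=pw_-(z)^{2}/(qz^{2})$, and the standard linearisation of a Möbius map yields the closed form
\[
f_k(z)=w_-(z)+\frac{A(z)\,\mu(z)^{k}\big(w_-(z)-w_+(z)\big)}{1-A(z)\,\mu(z)^{k}},\qquad A(z)=\frac{1-w_-(z)}{1-w_+(z)} .
\]
At $z=1$ one has $w_+(1)=1$, $w_-(1)=1/\beta$, $\mu(1)=1/\beta$, and $1-w_+(z)=\frac{2}{\beta-1}(z-1)+O((z-1)^{2})$.

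\textbf{Step 2: the tail of $T$ for fixed large $k$.} Extract $\P_k[T>t]$ from $\sum_t\P_k[T>t]z^{t}=\frac{1-f_k(z)}{1-z}$ by singularity analysis. The dominant singularity of $f_k$ is the simple pole $z_k^{\ast}$ solving $A(z_k^{\ast})\mu(z_k^{\ast})^{k}=1$; since $A(z)\sim \frac{(\beta-1)^{2}}{2\beta}(z-1)^{-1}$ near $z=1$ and $\mu(z_k^{\ast})^{k}=\beta^{-k}(1+o(1))$, this gives, with $B:=\frac{(\beta-1)^{2}}{2\beta}$,
\[
z_k^{\ast}=1+B\,\beta^{-k}+O(\beta^{-2k}),\qquad (z_k^{\ast})^{-t}=\exp\!\big(-B\beta^{-k}t\big)\,(1+o(1)).
\]
Computing the residue (the numerator of $f_k$ at $z_k^{\ast}$ is $w_-(z_k^{\ast})-w_+(z_k^{\ast})\to 1/\beta-1$, while $\frac{d}{dz}(A\mu^{k})(z_k^{\ast})\sim -(z_k^{\ast}-1)^{-1}$) yields $\operatorname{Res}_{z_k^{\ast}}f_k=-\frac{(\beta-1)B}{\beta}\beta^{-k}(1+o(1))$, hence
\[
\P_k[T>t]=\frac{\beta-1}{\beta}\,(z_k^{\ast})^{-t}\,(1+o(1))+(\text{contribution of the remaining singularities}).
\]
The remaining singularities are the branch point $z=\frac{1+\beta}{2\sqrt\beta}>1$ and the second pole of $f_k$, which approaches that branch point; both stay bounded away from $1$, so their contribution is exponentially smaller than $(z_k^{\ast})^{-t}$ in the range that matters. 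Consequently $\P_k[T>t]\to\frac{\beta-1}{\beta}e^{-Bs}$ as $k\to\infty$ with $\beta^{-k}t\to s\in(0,\infty)$, together with a rough bound $\P_k[T>t]\le C\min(1,e^{-cB\beta^{-k}t})$ valid uniformly in $k$ and $t$. (The parity of $T$ contributes only a matching singularity at $-z_k^{\ast}$, which affects lower‑order terms; alternatively one runs the whole argument for $T/2$.)

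\textbf{Step 3: averaging over the trap size and Mellin analysis.} Inserting Step 2 into $\PP[T>t]=(1-\alpha)\sum_{k\ge1}\alpha^{k}\P_k[T>t]$ and using the rough bound to truncate the small‑ and large‑$k$ tails (their total contribution to $t^{\varrho}\PP[T>t]$ is made arbitrarily small), dominated convergence reduces everything to the asymptotics of $t^{\varrho}\sum_{k\ge0}\alpha^{k}e^{-B\beta^{-k}t}$. With $\alpha=\beta^{-\varrho}$ and $\psi(x):=x^{\varrho}e^{-x}$,
\[
t^{\varrho}\sum_{k\ge0}\alpha^{k}e^{-B\beta^{-k}t}=B^{-\varrho}\sum_{k\ge0}\psi\!\big(Bt\,\beta^{-k}\big)=B^{-\varrho}\,P\!\left(\Big\{\tfrac{\log(Bt)}{\log\beta}\Big\}\right)+o(1),
\]
where $P(\theta):=\sum_{j\in\Z}\psi(\beta^{\theta+j})$ is continuous, $1$‑periodic and strictly positive (extending the sum from $k\ge0$ to $k\in\Z$ costs only $O(e^{-\beta Bt})$). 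The Fourier coefficients of $P$ are Mellin transforms of $\psi$ on the imaginary axis,
\[
\widehat P_n=\frac{1}{\log\beta}\int_0^{\infty}\psi(x)\,x^{-1-\frac{2\pi i n}{\log\beta}}\,dx=\frac{1}{\log\beta}\,\Gamma\!\Big(\varrho-\tfrac{2\pi i n}{\log\beta}\Big),
\]
so, using $\overline{\Gamma(\bar s)}=\Gamma(s)$, $P(\theta)=\frac{\Gamma(\varrho)}{\log\beta}\big[1+\sum_{n\ge1}c_n\cos(2\pi n\theta-d_n)\big]$ with exactly the $c_n,d_n$ of \eqref{E1.09} (rapid decay of $c_n$ follows from Stirling's estimate for $|\Gamma(\varrho+iy)|$). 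Since $(2\beta/(\beta-1)^{2})^{\varrho}=B^{-\varrho}$, collecting constants gives $t^{\varrho}\PP[T>t]=(1-\alpha)\frac{\beta-1}{\beta}B^{-\varrho}P(\{\log(Bt)/\log\beta\})+o(1)=g(Bt)+o(1)$, and $g(Bt)$ is bounded below by a positive constant; this is \eqref{E1.10}.

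\textbf{Where the difficulty lies.} Step 1 is an elementary excursion identity and Step 3 is the standard Mellin/harmonic‑sum toolbox. The real work is the uniformity in Step 2: one needs $\P_k[T>t]=\frac{\beta-1}{\beta}(z_k^{\ast})^{-t}(1+o(1))$ with error $o(1)$ uniformly over all $(k,t)$ contributing non‑negligibly to $t^{\varrho}\PP[T>t]$ — essentially $k\to\infty$ with $\beta^{-k}t$ in compact subsets of $(0,\infty)$. This requires (i) the expansion $z_k^{\ast}=1+B\beta^{-k}+O(\beta^{-2k})$ to be sharp enough that $t\,(\log z_k^{\ast}-B\beta^{-k})\to0$ on that range, and (ii) a bound, uniform in $k$, separating $z_k^{\ast}$ from the other singularities of $f_k$ — the subtle point being that the second pole of $f_k$ tends to the branch point $\frac{1+\beta}{2\sqrt\beta}$, near which the closed form of $f_k$ itself degenerates on scale $k^{-2}$. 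Establishing in addition the uniform two‑sided bound $c\min(1,e^{-C\beta^{-k}t})\le\P_k[T>t]\le C\min(1,e^{-c\beta^{-k}t})$ (obtainable from the explicit $f_k$, or from \cite{MeiLue}) supplies the domination needed to pass the limit through the infinite sum.
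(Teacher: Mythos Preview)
Your approach is correct and genuinely different from the paper's. The paper never writes down a generating function for $T$: instead, on the event $A=\{T_{\mathrm{in}}<T\}$ it decomposes $T=T_{\mathrm{in}}+T_{\mathrm{exc}}+T_{\mathrm{out}}$, shows probabilistically (by coupling with biased simple random walk on $\Z$) that $T_{\mathrm{in}}$, $T_{\mathrm{out}}$ and $T\mathbf{1}_{A^c}$ all have exponential tails, and writes $T_{\mathrm{exc}}=\sum_{i=1}^{N}T^{(i)}$ with $N$ geometric of parameter $\frac{\beta-1}{\beta^{k}-1}$ under $\P_k[\,\cdot\mid A]$. A convex--order comparison for geometric laws, lifted to branching processes with generation--dependent geometric offspring, gives a uniform exponential--moment bound on the centred $T^{(i)}$; a single Chernoff estimate then yields $\PP[|T_{\mathrm{exc}}-N\cdot\frac{2\beta}{\beta-1}|>t]=O(t^{-2\varrho})$, whence $\PP[T>t]\sim\PP[N>\frac{\beta-1}{2\beta}t]$. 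Replacing the geometric $N$ by an exponential with parameter $\beta^{-k}$ reaches exactly your harmonic sum $\sum_k(1-\alpha)\alpha^{k}e^{-B\beta^{-k}t}$. From that point on (your Step~3) the two arguments coincide: the paper also computes the Mellin transform $f^{*}(z)=\frac{(1-\alpha)\Gamma(z)}{1-\alpha\beta^{z}}$ and collects the residues at $\chi_k=\varrho+2\pi ik/\log\beta$.

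What your route buys is an explicit closed form for $f_k$ and a transparent identification of the dominant pole $z_k^{*}=1+B\beta^{-k}+O(\beta^{-2k})$, from which the prefactor $\frac{\beta-1}{\beta}$ drops out by a residue computation rather than by the gambler's--ruin probability $\P_k[A]$. What the paper's route buys is that the uniformity issue you correctly flag in Step~2 --- controlling the distance of $z_k^{*}$ to the second pole and the branch point $\frac{\beta+1}{2\sqrt{\beta}}$ as $k\to\infty$, and producing a dominated--convergence envelope --- is entirely bypassed: the replacement of $T_{\mathrm{exc}}$ by $\frac{2\beta}{\beta-1}N$ is controlled by a uniform exponential--moment bound on a \emph{single} excursion, and no singularity analysis is needed. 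Your sketch of (i) and (ii) in the final paragraph is on the right track and can be completed, but it is genuine work that the paper's probabilistic decomposition sidesteps.
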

\subsection{Outline}
\label{S1.3}
The strategy of the proof is as follows: We first consider the event $A$ where $X$ reaches $k$ before returning to $0$. On the complement of this event, $T$ is very small and hence this case can be neglected for the tail of $T$ (Lemma~\ref{L2.03}). On the event $A$, we split the time $T$ into three parts:
\begin{itemize}
\item[(1)] the time $T_{\mathrm{in}}$ needed to reach $k$,
\item[(2)] the time $T_{\mathrm{exc}}$ spent in excursions from $k$ to $k$ that do not reach $0$, and
\item[(3)] the length $T_{\mathrm{out}}$ of the last excursion from $k$ to $0$.
\end{itemize}
We will show that the contributions from (1) and (3) can be neglected (Lemma~\ref{L2.04} and Lemma~\ref{L2.05}). Finally, we consider (2). The number of excursions is geometrically distributed and the length of the single excursion has exponential moments. We infer that the tail of $T$ is governed by the number of excursions multiplied by their expected lengths (Proposition~\ref{P2.17}). The number of excursions is geometrically distributed with a parameter that depends on $k$. We use a very detailed analysis to determine the tail averaged over $k$.

\section{Proofs}
\label{S2}
\setcounter{equation}{0}
\subsection{The time to get in and out}
\label{S2.1}

Let
\begin{equation}
\label{E2.01}
T_{\mathrm{in}}:=\inf\big\{t>0:\,X_t=k\big\}
\end{equation}
be the time it takes to hit the right end of the interval. Let
\begin{equation}
\label{E2.02}
T_{\mathrm{last}}:=\sup\big\{t<T:\,X_t=k\big\}
\end{equation}
be the last visit (if any) of the right end of the interval before returning to $0$.
Let
\begin{equation}
\label{E2.03}
T_{\mathrm{out}}:=T-T_{\mathrm{last}}
\end{equation}
denote the time it takes for this last excursion from $k$ to hit $0$. Finally, let
\begin{equation}
\label{E2.04}
T_{\mathrm{exc}}:=T_{\mathrm{last}}-T_{\mathrm{in}},
\end{equation}
denote the time, the random walk spends in excursions from $k$ before the last excursion from $k$ starts. The random times $T_{\mathrm{exc}}$, $T_{\mathrm{last}}$ and $T_{\mathrm{out}}$ are well-defined on the event
\begin{equation}
\label{E2.05}
A:=\big\{T_{\mathrm{in}}<T\}
\end{equation}
In fact, on $A$, we have $T_{\mathrm{last}}<\infty$.

\begin{lemma}
\label{L2.01}
\begin{equation}
\label{E2.06}
\P_k[A]=\frac{\beta-1}{\beta-\beta^{1-k}}\geq\frac{\beta-1}{\beta}.
\end{equation}
\end{lemma}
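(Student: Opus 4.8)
The plan is to compute $\P_k[A]$ as a gambler's-ruin type hitting probability for the biased walk on $\{0,\dots,k\}$ with reflection at $0$. Concretely, $A = \{T_{\mathrm{in}} < T\}$ is the event that, starting from $X_0 = 0$, the walk reaches $k$ before returning to $0$. Since from $0$ the walk deterministically steps to $1$ on the first move, we have $\P_k[A] = \P_k[\,\tau_k < \tau_0 \mid X_0 = 1\,]$, where $\tau_j = \inf\{t \ge 0 : X_t = j\}$. This last probability is the standard hitting probability for a nearest-neighbour walk on $\{1,\dots,k\}$ that jumps right with probability $\beta/(1+\beta)$ and left with probability $1/(1+\beta)$ from interior sites, with absorption at $0$ and $k$ — the reflection at $0$ plays no role here because we stop upon hitting $0$.

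The key step is to solve the associated harmonic equation. Let $h(l) = \P_k[\tau_k < \tau_0 \mid X_0 = l]$ for $l \in \{0,\dots,k\}$. Then $h$ is harmonic for the walk on the interior, $h(0) = 0$, $h(k) = 1$, and the general solution of the recursion $h(l) = \frac{\beta}{1+\beta} h(l+1) + \frac{1}{1+\beta} h(l-1)$ is $h(l) = a + b\,(1/\beta)^l$ (since the characteristic equation has roots $1$ and $1/\beta$). Imposing the boundary conditions gives
\begin{equation}
h(l) = \frac{1 - \beta^{-l}}{1 - \beta^{-k}}.
\end{equation}
Evaluating at $l = 1$ yields
\begin{equation}
\P_k[A] = h(1) = \frac{1 - \beta^{-1}}{1 - \beta^{-k}} = \frac{\beta - 1}{\beta - \beta^{1-k}},
\end{equation}
which is exactly \eqref{E2.06}. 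The inequality $\P_k[A] \ge (\beta-1)/\beta$ then follows since $\beta - \beta^{1-k} \le \beta$ for $k \ge 1$ (as $\beta^{1-k} > 0$), so the denominator only shrinks.

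I do not expect any real obstacle here: the only mild point of care is the reduction from $X_0 = 0$ (with forced first step to $1$) to the gambler's-ruin problem started at $1$ with absorbing barriers at $0$ and $k$, and the observation that the reflection at $k$ is likewise irrelevant since we are computing the probability of hitting $k$, not the behaviour afterwards. Everything else is the textbook solution of a two-point boundary value problem for a second-order linear recursion. One could alternatively derive $h(1)$ directly from the electrical-network interpretation used in \eqref{E1.05}: $\P_k[A]$ equals the escape probability from $1$ to $k$ avoiding $0$, which is the ratio of the effective conductance between $1$ and $\{0,k\}$ contributed by the $1$–$k$ side, but the recursion argument is the most transparent.
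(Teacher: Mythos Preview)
Your proof is correct. The paper takes the electrical-network route instead: with resistances $R(l,l+1)=\beta^{-l}$ it computes $R_{\mathrm{eff}}(0,k)=\sum_{l=0}^{k-1}\beta^{-l}$ and uses $\P_k[A]=R_{\mathrm{eff}}(0,1)/R_{\mathrm{eff}}(0,k)$. This is just the dual formulation of your harmonic-function argument (the function $h(l)=(1-\beta^{-l})/(1-\beta^{-k})$ is precisely the voltage in that network), so the two proofs are essentially the same; the paper's version is a line shorter because it quotes the resistance formula, while yours is more self-contained since it solves the recursion explicitly.
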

\begin{Proof}
Considering $\{0,\ldots,k\}$ as an electrical network with resistances $R(l,l+1)=\beta^{-l}$, we get the effective resistances $R_{\mathrm{eff}}(0,1)=1$ and
$$R_{\mathrm{eff}}(0,k)=1+\beta^{-1}+\ldots+\beta^{-k+1}=\frac{1-\beta^{-k}}{1-1/\beta}.$$
Now (compare, e.g., \cite[(19.9)]{Klenke2020})
\[\P_k[A]=\frac{R_{\mathrm{eff}}(0,1)}{R_{\mathrm{eff}}(0,k)}=\frac{\beta-1}{\beta-\beta^{1-k}}.\eope\]
\end{Proof}

On $A^c$, until time $T$, $X$ is a random walk conditioned to return to $0$ before hitting $k$. Now let $U$ be such a random walk started in $U_0=0$. Let $T^{U}:=\inf\big\{t>0:U_t=0\big\}$. Then
\begin{equation}
\label{E2.07}
\P_k\big[T^U=t\big]=\P_k\big[T=t\Given A^c\big]\mfa t.
\end{equation}

The transition probabilities of $U$ can be computed via Doob's $h$-transforms. Let $h_k(l)=\beta^{-l}-\beta^{-k}$ be a harmonic (on $\{1,\ldots,k-1\}$) function for $X$ with $h_k(k)=0$ and $h_k(0)>0$. Then for $l=1,\ldots,k-1$, we have
\begin{equation}
\label{E2.08}
\P_k\big[U_{t+1}=l+1\Given U_t=l\big]=\frac{h_k(l+1)}{h_k(l)}\frac{\beta}{1+\beta}=\frac{1}{\beta+1}\left(1-\frac{\beta-1}{\beta^{k-l}-1}\right).
\end{equation}
We compare $U$ to the random walk $\check Y$ on $\Z$ with conductances $\beta^{-l}$ along the edge $\{l,l+1\}$. That is, $\check Y$ makes a jump to the right with probability $1/(1+\beta)$ and to the left with probability $\beta/(1+\beta)$. Also, let $Y$ be the random walk on $\Z$ with conductances $\beta^l$ along the edge $\{l,l+1\}$. That is, $-Y$ has the same jump probabilities as $\check Y$. Let
$$T^Y:=\inf\big\{t>0:Y_t=0\big\}\mbu T^{\check Y}:=\inf\big\{t>0:\,\check Y_t=0\big\}.$$
Clearly, if $Y_0=\check Y_0=0$, then $T^Y$ and $T^{\check Y}$ have the same distribution.
By \equ{E2.08}, we see that $T^U$ is stochastically bounded by $T^{\check Y}$. More precisely, we have
\begin{equation}
\label{E2.09}
\P_k[T^U>t]\leq \P\big[T^{\check Y}>t\Given \check Y_1=1\big].
\end{equation}
\begin{lemma}
\label{L2.02}
We have\begin{equation}
\label{E2.10}
\E\big[T^{\check Y}\Given \check Y_1=1\big]=\frac{2\beta}{\beta-1},\qquad \Var\big[T^{\check Y}\Given \check Y_1=1\big]=\frac{4\beta(\beta+1)}{(\beta-1)^3}
\end{equation}
and
\begin{equation}
\label{E2.11}
\E\big[e^{\lambda T^{\check Y}}\Given \check Y_1=1\big]=\frac{1}{2}\left(\beta+1-\sqrt{(\beta+1)^2-4\beta e^{2\lambda}}\right) \mfa \lambda<\log\frac{\beta+1}{2\sqrt{\beta}}.
\end{equation}
By symmetry, the statements also hold for $Y$ instead of $\check Y$ conditioned on $Y_1=-1$.
\end{lemma}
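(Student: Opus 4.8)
The plan is to reduce everything to the first‑passage time of a biased nearest‑neighbour walk on $\Z$ and to one quadratic equation for its moment generating function. Throughout write $p:=\frac{1}{1+\beta}$ and $q:=\frac{\beta}{1+\beta}$ for the right/left step probabilities of $\check Y$, so $p+q=1$ and $q>p$.

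\textit{Reduction.} Let $\tau$ denote the first hitting time of $0$ by $\check Y$ started at $1$. Because the walk has negative drift, $\tau<\infty$ a.s. Since $\check Y_0=0$ and $\check Y_1=1\ne 0$, the strong Markov property gives $T^{\check Y}\mid\{\check Y_1=1\}\stackrel{d}{=}1+\tau$, so it suffices to determine $\phi(\lambda):=\E_1\big[e^{\lambda\tau}\big]$ together with its first two derivatives at $0$, and then read off $\E\big[e^{\lambda T^{\check Y}}\mid\check Y_1=1\big]=e^\lambda\phi(\lambda)$.

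\textit{Functional equation and the MGF.} By first‑step analysis: from $1$, either step left to $0$ in one unit of time, or step right to $2$ in one unit of time and then traverse $2\to1\to0$, which by spatial homogeneity and the strong Markov property is an independent sum of two copies of $\tau$. Hence
\[
\phi(\lambda)=e^\lambda\big(q+p\,\phi(\lambda)^2\big)
\]
for every $\lambda$ with $\phi(\lambda)<\infty$. Solving the quadratic $p e^{\lambda}\phi^2-\phi+q e^{\lambda}=0$ and selecting the root with $\phi(0)=1$ — legitimate since $\tau<\infty$ a.s.; here one uses $1-4pq=(q-p)^2$ and $q>p$ — gives
\[
\phi(\lambda)=\frac{1-\sqrt{1-4pq\,e^{2\lambda}}}{2p\,e^\lambda},
\]
valid precisely while the discriminant $(\beta+1)^2-4\beta e^{2\lambda}=(1+\beta)^2\big(1-4pq\,e^{2\lambda}\big)$ is positive, i.e.\ for $\lambda<\log\frac{\beta+1}{2\sqrt\beta}$; beyond this value $\phi\equiv\infty$. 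Multiplying by $e^\lambda$ and substituting $2p=\frac{2}{1+\beta}$ and $4pq=\frac{4\beta}{(1+\beta)^2}$ yields $\E\big[e^{\lambda T^{\check Y}}\mid\check Y_1=1\big]=\frac12\big(\beta+1-\sqrt{(\beta+1)^2-4\beta e^{2\lambda}}\big)$, which is \eqref{E2.11}.

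\textit{Moments and symmetry.} Since $\frac{\beta+1}{2\sqrt\beta}>1$, the MGF is finite on an open neighbourhood of $0$, so $T^{\check Y}$ (given $\check Y_1=1$) has moments of all orders, obtained by differentiating $\psi(\lambda):=\frac12\big(\beta+1-\sqrt{(\beta+1)^2-4\beta e^{2\lambda}}\big)$ at $\lambda=0$. A short computation gives $\psi'(\lambda)=2\beta e^{2\lambda}\big((\beta+1)^2-4\beta e^{2\lambda}\big)^{-1/2}$, hence $\psi'(0)=\frac{2\beta}{\sqrt{(\beta-1)^2}}=\frac{2\beta}{\beta-1}$, and $\psi''(0)=\frac{4\beta}{\beta-1}+\frac{8\beta^2}{(\beta-1)^3}$, so $\Var=\psi''(0)-\psi'(0)^2=\frac{4\beta(\beta+1)}{(\beta-1)^3}$ after collecting over the common denominator $(\beta-1)^3$; this proves \eqref{E2.10}. (Equivalently one may differentiate the functional equation $\phi=e^\lambda(q+p\phi^2)$ twice at $\lambda=0$ using $\phi(0)=1$, avoiding the square root.) Finally, the statements for $Y$ conditioned on $Y_1=-1$ follow at once since $-Y\stackrel{d}{=}\check Y$, whence $T^Y\mid\{Y_1=-1\}\stackrel{d}{=}T^{\check Y}\mid\{\check Y_1=1\}$. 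The only points needing care — none serious — are the branch selection (pinned down by $\phi(0)=1$), identifying the exact radius $\lambda<\log\frac{\beta+1}{2\sqrt\beta}$, and justifying differentiation under the expectation, which is immediate once the MGF is known to be finite on an open interval around $0$.
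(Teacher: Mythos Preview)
Your proof is correct and follows essentially the same approach as the paper: both set up a quadratic functional equation for the moment generating function via one-step decomposition and the strong Markov property, select the correct root, and then differentiate at $\lambda=0$ to obtain the mean and variance. Your presentation is arguably cleaner, working directly with $\phi(\lambda)=\E_1[e^{\lambda\tau}]$ and then multiplying by $e^\lambda$, whereas the paper manipulates the equivalent quantity $\psi=e^\lambda\phi$; the resulting quadratic $p\psi^2-\psi+qe^{2\lambda}=0$ is the same as yours after this substitution.
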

\begin{proof}
Define
$$\tau:=\inf\{t\geq 1:\,\check Y_t=1\}.$$
Define the function $\psi$ by
$$\psi(\lambda)=\E\big[e^{\lambda T^{\check Y}}\Given \check Y_0=1\big]
=e^\lambda\,\E\big[e^{\lambda T^{\check Y}}\Given \check Y_1=1\big].$$
Decomposing according to the position of $\check Y$ at time $1$ and using the strong Markov property at time $\tau$ (in the fourth line) yields
$$\begin{aligned}
\psi(\lambda)
&=
e^\lambda\,\E\big[e^{\lambda T^{\check Y}}\1_{\{\check Y_1=0\}}\Given \check Y_0=1\big]+
e^\lambda\,\E\big[e^{\lambda T^{\check Y}}\1_{\{\check Y_1=2\}}\Given \check Y_0=1\big]\\
&=
\frac{\beta}{1+\beta}e^{2\lambda}+
e^\lambda\,\E\big[e^{\lambda (T^{\check Y}-\tau)}\1_{\{\check Y_1=2\}}\Given \check Y_0=1\big]
\E\big[e^{\lambda \tau}\Given \check Y_1=2\big]\\
&=
\frac{\beta}{1+\beta}e^{2\lambda}
+\frac{1}{1+\beta}
e^\lambda\,\E\big[e^{\lambda (T^{\check Y}-\tau)}\Given\check Y_1=2\big]
\E\big[e^{\lambda \tau}\Given \check Y_1=2\big]\\
&=
\frac{\beta}{1+\beta}e^{2\lambda}
+\frac{1}{1+\beta}
e^{\lambda}\E\big[e^{\lambda T^{\check Y}}\Given\check Y_0=1\big]
\E\big[e^{\lambda \tau}\Given \check Y_1=2\big]\\
&=
\frac{\beta}{1+\beta}e^{2\lambda}
+\frac{1}{1+\beta}
\psi(\lambda)^2.
\end{aligned}$$
This quadratic equation has two solutions which at $\lambda=0$ take the values $1$ and $\beta$, respectively. The relevant one takes the value $1$ and is given in \equ{E2.11}.
Taking the derivatives at $\lambda=0$ gives \equ{E2.10}.
\end{proof}

\begin{lemma}
\label{L2.03}
There exists an $\varepsilon>0$ such that
\begin{equation}
\label{E2.12}
\PP[T>t\given A^c]\leq e^{-\varepsilon t},\qquad t\geq 1.
\end{equation}
\end{lemma}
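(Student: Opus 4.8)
The plan is to bound the conditional tail of $T$ by the tail of the return time to $0$ of the left‑drifting nearest‑neighbour walk $\check Y$ on $\Z$, for which \equ{E2.11} supplies an explicit exponential moment, and then to use that this bound does not depend on $k$, so that the mixture \equ{E1.02} over the geometrically distributed $k$ causes no trouble.

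First combine two facts already at hand: by \equ{E2.07} the conditional law of $T$ given $A^c$ coincides with the law of $T^U$, and by the stochastic domination \equ{E2.09} one has $\P_k[T^U>t]\le\P\big[T^{\check Y}>t\Given\check Y_1=1\big]$ for every $t$ and every $k$; hence it suffices to control the latter. Fix $\lambda$ with $0<\lambda<\log\frac{\beta+1}{2\sqrt{\beta}}$; this interval is non‑empty because $\beta>1$ forces $\beta+1>2\sqrt{\beta}$. By \equ{E2.11} the quantity $\psi:=\E\big[e^{\lambda T^{\check Y}}\Given\check Y_1=1\big]$ is finite, so Markov's inequality applied to $e^{\lambda T^{\check Y}}$ gives
\[
\P\big[T^{\check Y}>t\Given\check Y_1=1\big]\le\psi\,e^{-\lambda t}\qquad\text{for all }t\ge 0 .
\]
Consequently $\P_k[T>t\given A^c]\le\psi\,e^{-\lambda t}$ for every $k\ge 2$; the values $k=0$ (where $T\equiv 0$) and $k=1$ (where $A^c=\emptyset$) put no mass on $\{T>t\}\cap A^c$ and may be ignored.

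Averaging is now immediate, since the bound does not depend on $k$:
\[
\PP[T>t,\,A^c]=\sum_{k\ge 0}(1-\alpha)\alpha^k\,\P_k[A^c]\,\P_k[T>t\given A^c]\le\psi\,e^{-\lambda t}\,\PP[A^c],
\]
and $\PP[A^c]>0$ (already the $k=2$ summand of \equ{E1.02} contributes, by Lemma~\ref{L2.01}), whence $\PP[T>t\given A^c]\le\psi\,e^{-\lambda t}$. Choosing any $\varepsilon\in(0,\lambda)$ and absorbing the constant $\psi$ into the exponential then yields \equ{E2.12}. I do not expect a genuine obstacle here: the one point requiring care is that the estimate be uniform in $k$, and this is exactly what the comparison \equ{E2.09} with the $k$‑independent walk $\check Y$ delivers, while \equ{E2.11} certifies that $\check Y$ has an exponentially decaying return‑time tail.
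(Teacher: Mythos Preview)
Your proof is correct and follows exactly the approach the paper sketches: combine \equ{E2.07}, the stochastic domination \equ{E2.09}, and the exponential moment \equ{E2.11} from Lemma~\ref{L2.02}, then apply Markov's inequality. You have simply made explicit the details---the uniformity in $k$, the handling of $k\in\{0,1\}$, and the averaging over the geometric law of $k$---that the paper leaves to the reader.
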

\begin{proof}
This is a direct consequence of \equ{E2.07}, \equ{E2.09} and the existence of exponential moments (Lemma~\ref{L2.02}).
\end{proof}

\begin{lemma}
\label{L2.04}
There exists an $\varepsilon>0$ such that
\begin{equation}
\label{E2.13}
\PP[T_{\mathrm{in}}>t\given A]\leq e^{-\varepsilon t},\qquad t\geq 1.
\end{equation}
\end{lemma}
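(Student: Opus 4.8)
The plan is to bound $T_{\mathrm{in}}$ conditioned on $A$ by comparing it to the unconditioned hitting time $T^Y$ of $0$ by the free walk $Y$ on $\Z$ with conductances $\beta^l$, which drifts to the right and so reaches any level $k$ with positive probability. The key point is that conditioning on $A$ (reaching $k$ before returning to $0$) only reinforces the rightward drift, so it should not slow the walk down. First I would observe that on the event $A$, up to time $T_{\mathrm{in}}$ the walk $X$ restricted to $\{0,\ldots,k\}$ conditioned to hit $k$ before $0$ can be realized (via the appropriate Doob $h$-transform, using the harmonic function $l\mapsto \beta^{-l}$, i.e. the escape probability to $k$) as a Markov chain whose upward transition probabilities dominate those of $Y$ started at $1$ conditioned on $Y_1=-1$ — or more cleanly, I would simply couple $T_{\mathrm{in}}$ under $\P_k[\,\cdot\mid A]$ with the hitting time of level $k$ by a random walk on $\Z$ that has at least the drift of $Y$.

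The cleaner route avoids the $h$-transform entirely: the first step of $X$ from $0$ always goes to $1$, and from then on, up to the first visit to $k$, the path of $X$ on $A$ is that of the free walk $Y$ (conductances $\beta^l$, jump right with probability $\beta/(1+\beta)$) started at $1$, conditioned on the event $\{$hit $k$ before $0\}$. Since hitting $k$ before $0$ is a monotone event in the path (adding upward excursions can only help), conditioning on it is stochastically dominated in the sense that the conditioned hitting time of $k$ is stochastically smaller than $1 +$ (the hitting time of $k$ by the free walk $Y$ started at $1$, on the unconditioned event of hitting $k$ first). I would make this precise by writing $T_{\mathrm{in}}$ as a sum over the $k$ successive crossings from level $l$ to level $l+1$ under the conditioned law; each such crossing, conditioned on eventually reaching $k$, is stochastically dominated by the corresponding crossing time of the free walk conditioned to drift upward, and the latter has a uniformly (in $k$ and $l$) finite exponential moment by the same quadratic-equation computation as in Lemma~\ref{L2.02} (applied to $Y$ rather than $\check Y$, i.e. the version with the rightward drift). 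Concretely, each level-crossing time is stochastically bounded by $T^{\check Y}$ conditioned on $\check Y_1=1$ reversed appropriately, whose moment generating function is finite for $\lambda<\log\frac{\beta+1}{2\sqrt\beta}$.

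Having bounded $T_{\mathrm{in}}$ on $A$ by a sum of $k$ i.i.d.\ (or stochastically dominated by i.i.d.) crossing times $S_1,\ldots,S_k$ with a common finite exponential moment, I would fix $\lambda>0$ small enough that $m:=\E[e^{\lambda S_1}]<\infty$, apply the exponential Chebyshev inequality to get $\P_k[T_{\mathrm{in}}>t\mid A]\le m^k e^{-\lambda t}$, and then average over $k$ against the geometric weights $(1-\alpha)\alpha^k$. The geometric tail kills the $m^k$ factor provided $\alpha m<1$; since $m\to 1$ as $\lambda\to 0$, we can choose $\lambda$ small enough that $\alpha m<1$, yielding $\PP[T_{\mathrm{in}}>t,A]\le C e^{-\lambda t}$, and dividing by $\PP[A]\ge\frac{\beta-1}{\beta}(1-\alpha)>0$ (from Lemma~\ref{L2.01}, summed over $k\ge 1$) gives the claimed bound with a possibly smaller $\varepsilon$. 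The main obstacle is the first paragraph: making rigorous the stochastic domination of the conditioned level-crossing times by unconditioned free-walk quantities with a $k$-uniform exponential moment. The cleanest way around it is to note that conditioning the free walk $Y$ (started at $1$) on $\{$reach $k$ before $0\}$ only removes paths, and that the removed paths are exactly those that spend extra time near $0$; a monotone coupling shows each crossing from $l$ to $l+1$ under the conditioned law is no slower than the same crossing of $\check Y$ started from the right, whose exponential moment is the one computed in Lemma~\ref{L2.02} and is manifestly independent of $k$.
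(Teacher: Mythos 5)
Your argument is correct in substance but takes a genuinely different route from the paper, and is somewhat more elaborate than it needs to be. The paper sidesteps the conditioning on $A$ at the very first step: since Lemma~\ref{L2.01} gives $\P_k[A]\geq\frac{\beta-1}{\beta}$ uniformly in $k\geq1$, one immediately has $\PP[T_{\mathrm{in}}>t\given A]\leq \PP[T_{\mathrm{in}}>t]/\PP[A]$, and it suffices to bound the \emph{unconditional} tail $\PP[T_{\mathrm{in}}>t]$. There is then no Doob $h$-transform to contend with: the original walk $X$ (with reflection at $0$) already has jump-right probability at least $\beta/(\beta+1)$ at every site in $\{0,\ldots,k-1\}$, so $X$ can be coupled with $Y$ so that $X_t\geq Y_t$ for $t\leq T_{\mathrm{in}}$; hence $\P_k[T_{\mathrm{in}}>t]\leq\P[Y_t<k\given Y_0=0]$. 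The paper then applies Cram\'er's theorem to the i.i.d.\ increments of $Y_t$ (giving $\P[Y_t<k]\leq e^{-\varepsilon t}$ whenever $2k\leq\frac{\beta-1}{\beta+1}t$), and finishes by splitting the $k$-average into the ``small $k$'' range where Cram\'er applies and the ``large $k$'' range controlled by the geometric tail of $\alpha^k$.

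Your route instead keeps the conditioning on $A$ throughout, identifies the conditional law of $X$ up to $T_{\mathrm{in}}$ as a Doob $h$-transform (with $h(l)=1-\beta^{-l}$), checks that the transformed upward probability $\frac{\beta}{\beta+1}\cdot\frac{1-\beta^{-l-1}}{1-\beta^{-l}}$ is $\geq\frac{\beta}{\beta+1}$, and then monotonically couples the $h$-transformed walk with $Y$. This yields $T_{\mathrm{in}}\restriction A\preceq S_1+\cdots+S_{k-1}$ where the $S_i$ are i.i.d.\ level-crossing times of $Y$ with finite MGF $m=m(\lambda)$ for $\lambda<\log\frac{\beta+1}{2\sqrt\beta}$ (via the same quadratic as in Lemma~\ref{L2.02}), and exponential Chebyshev plus $m(\lambda)\to1$ as $\lambda\to0^+$ lets you pick $\lambda$ with $\alpha m<1$ so that $\sum_k(1-\alpha)\alpha^k m^{k-1}e^{-\lambda t}=O(e^{-\lambda t})$. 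What you gain is an elementary, Cram\'er-free argument and a single geometric-series computation rather than a case split in $k$; what the paper gains is that by reducing to the unconditional tail immediately, the coupling is with $X$ itself (trivial, since $X$ already drifts right) rather than with an $h$-transformed chain, and the whole discussion of the escape probability, its monotonicity, and the level-crossing decomposition is unnecessary. A couple of places in your write-up are imprecise --- the phrase ``the hitting time of $k$ by the free walk $Y$ started at $1$, on the unconditioned event of hitting $k$ first'' conflates two objects (the relevant dominating quantity is simply the unconditioned hitting time of $k$ by $Y$, which is a.s.\ finite by the drift), and ``the removed paths are exactly those that spend extra time near $0$'' is only heuristic --- but the rigorous version via the explicit $h$-transform inequality $h(l+1)/h(l)\geq1$ and a standard monotone coupling of birth--death chains (which requires checking $p^{\text{cond}}_{l+1}\geq p^Y$, satisfied here) is sound.
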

\begin{Proof}
By Lemma~\ref{L2.01}, it is enough to show
\begin{equation}
\label{E2.14}
\PP[T_{\mathrm{in}}>t]\leq e^{-\varepsilon t},\qquad t\geq 1.
\end{equation}
Note that $X$ and $Y$ can be coupled such that $X_t\geq Y_t$ for all $t\leq T_{\mathrm{in}}$. Hence
$$\P_k[T_{\mathrm{in}}>t]\leq \P[Y_t<k\given Y_0=0].$$
Now $Y_t$ is a sum of $t$ i.i.d. random variables and $\E[Y_t]=\frac{\beta-1}{\beta+1}\,t>0$, hence
by Cram\'er's theorem, there exists an $\varepsilon>0$ such that
$\P[Y_t<k]\leq e^{-\varepsilon t}$ for $t\,\frac{\beta-1}{\beta+1}\geq2k$.
Hence
$$\P_k[T_{\mathrm{in}}>t] \leq e^{-\varepsilon t}\mbox{ for }t\,\frac{\beta-1}{\beta+1}\geq2k.
$$
Concluding, we have
\[
\PP[T_{\mathrm{in}}>t] \leq e^{-\varepsilon t}+(1-\alpha)\sum_{2k> t(\beta-1)/(\beta+1)}\alpha^k
\leq e^{-\varepsilon t}+\alpha^{t(\beta-1)/2(\beta+1)}.
\eope\]
\end{Proof}
\begin{lemma}
\label{L2.05}
There exists an $\varepsilon>0$ such that
\begin{equation}
\label{E2.15}
\PP[T_{\mathrm{out}}>t\given A]\leq e^{-\varepsilon t},\qquad t\geq 1.
\end{equation}
Furthermore, for each $k\in\N$,
\begin{equation}
\label{E2.16}
\E_k\big[T_{\mathrm{out}}\given A\big]\leq k\,\frac{\beta+1}{\beta-1}.
\end{equation}

\end{lemma}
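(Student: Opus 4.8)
The plan is to identify, for fixed $k\geq 1$, the conditional law of $T_{\mathrm{out}}$ given $A$, and then to average over the geometrically distributed $k$ as in the proof of Lemma~\ref{L2.04}.

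The first step is to show that, on $A$ and under $\P_k$,
\[
T_{\mathrm{out}}\ \deq\ 1+T^{U,k-1},
\]
where $T^{U,k-1}$ is the hitting time of $0$ by the conditioned walk $U$ of \equ{E2.08} started at $k-1$. Indeed, by the strong Markov property at $T_{\mathrm{in}}$ the process after $T_{\mathrm{in}}$ is a fresh walk started at $k$, and a last-exit decomposition at $T_{\mathrm{last}}$ identifies the last excursion from $k$ to $0$ with an excursion from $k$ conditioned to reach $0$ before returning to $k$. Its first step is forced to $k-1$, and conditioning the walk started at $k-1$ to reach $0$ before $k$ is precisely the Doob $h_k$-transform, with $h_k(l)=\beta^{-l}-\beta^{-k}$ (proportional to $\P_l[\text{hit }0\text{ before }k]$) and transition probabilities \equ{E2.08}. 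By \equ{E2.08}, $U$ jumps to the right with probability at most $\tfrac1{\beta+1}$ from every interior site, so $U$ started at $k-1$ can be coupled with $\check Y$ started at $k-1$ so as to stay below it; hence $T^{U,k-1}$ is stochastically dominated by $T^{\check Y,k-1}$, the hitting time of $0$ by $\check Y$ started at $k-1$.

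Since $\check Y$ moves in steps of $\pm1$, the strong Markov property and translation invariance make $T^{\check Y,k-1}$ a sum of $k-1$ i.i.d.\ copies of the time $\check Y$ takes to pass from $1$ to $0$; by Lemma~\ref{L2.02} this time has an exponential moment and mean $\tfrac{2\beta}{\beta-1}-1=\tfrac{\beta+1}{\beta-1}$, the $-1$ removing the initial step counted when conditioning on $\check Y_1=1$. Hence $\E_k[T_{\mathrm{out}}\mid A]\leq 1+(k-1)\tfrac{\beta+1}{\beta-1}\leq k\,\tfrac{\beta+1}{\beta-1}$, which is \equ{E2.16}. For \equ{E2.15}, with $\nu(\lambda):=\E[e^{\lambda T^{\check Y}}\mid\check Y_1=1]$ (see \equ{E2.11}; $\nu(0)=1$), the same domination gives $\E_k[e^{\lambda T_{\mathrm{out}}}\mid A]\leq e^\lambda\nu(\lambda)^{k-1}$, so using $\P_k[A]\leq1$,
\[
\EE\big[e^{\lambda T_{\mathrm{out}}};A\big]\ \leq\ \sum_{k\geq1}(1-\alpha)\,\alpha^k\,e^\lambda\,\nu(\lambda)^{k-1}\ =\ \frac{(1-\alpha)\,\alpha\,e^\lambda}{1-\alpha\,\nu(\lambda)}\ <\ \infty
\]
as soon as $\lambda>0$ is small enough that $\alpha\,\nu(\lambda)<1$ (possible since $\nu$ is continuous and $\nu(0)=1<1/\alpha$). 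As $\PP[A]\geq\tfrac{\beta-1}{\beta}\,\alpha>0$ by Lemma~\ref{L2.01} and \equ{E1.02}, this yields a finite exponential moment of $T_{\mathrm{out}}$ under $\PP[\,\cdot\mid A]$, and \equ{E2.15} follows by Markov's inequality, as in the proof of Lemma~\ref{L2.04}.

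I expect the only delicate point to be the identification of the conditional law of the last excursion — the last-exit decomposition plus recognizing it as the $h_k$-transform $U$; everything afterwards is the coupling and geometric sum above. A tidier route that avoids it uses reversibility of $X$: time-reversal maps the last excursion from $k$ to $0$ onto the first excursion from $0$ to $k$, so $\P_k[T_{\mathrm{out}}=n\mid A]=\P_k[T_{\mathrm{in}}=n\mid A]$ for all $n$ and $k$; then $\PP[T_{\mathrm{out}}>t\mid A]=\PP[T_{\mathrm{in}}>t\mid A]$ and \equ{E2.15} is just Lemma~\ref{L2.04}, while \equ{E2.16} becomes a bound on $\E_k[T_{\mathrm{in}}\mid A]$, obtained by dominating the $A$-conditioned walk (which jumps right with probability $\geq\tfrac{\beta}{\beta+1}$ from every interior site) by the walk $Y$ and noting that $Y$ travels from $0$ to $k$ in expected time $k\,\tfrac{\beta+1}{\beta-1}$.
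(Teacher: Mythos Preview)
Your argument is correct and tracks the paper's proof closely. Both identify the conditional law of $T_{\mathrm{out}}$ given $A$ with the hitting time of $0$ by the $h_k$-transformed walk started from $k$ (the paper calls this walk $V$; your $1+T^{U,k-1}$ is the same object after the forced first step), and both dominate it by the corresponding hitting time for $\check Y$ started at $k$, which immediately yields \equ{E2.16}. The only substantive difference is in deriving \equ{E2.15}: the paper bounds $\P_k[T^V>t]$ by $\P[\check Y_t>0\mid\check Y_0=k]$ and then invokes Cram\'er's theorem and the geometric tail of $k$ exactly as in Lemma~\ref{L2.04}, whereas you sum the explicit moment generating function from Lemma~\ref{L2.02} against the geometric law of $k$ and apply Markov's inequality. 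Your route is slightly more direct, avoiding the case-split on $k$ versus $t$. The reversibility alternative you sketch at the end (giving $\P_k[T_{\mathrm{out}}=n\mid A]=\P_k[T_{\mathrm{in}}=n\mid A]$) is not in the paper and is a genuinely tidy shortcut, collapsing \equ{E2.15} to Lemma~\ref{L2.04} in one line.
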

\begin{Proof}
Let $V$ be a random walk on $\{0,\ldots,k\}$ with the same transition probabilities as $U$ (see \equ{E2.08}) but started at $k$. Define $T^V:=\inf\{t>0:V_t=0\}$.

Note that $V$ can be coupled with $\check Y$ (started in $\check Y_0=k$) such that $V_t\leq \check Y_t$ for $t\leq T^V$.
Arguing as in the proof of Lemma~\ref{L2.04}, we get an $\varepsilon>0$ such that
$$\PP\big[T_{\mathrm{out}}>t\given A\big]=\PP\big[T^V>t\big]\leq \P\big[\check Y_t>0\big]\leq e^{-\ve t},\qquad t\geq1.$$
Let $T^{\check Y}_0:=\inf\{t>0:\,{\check Y}_t=0\}$. Note that ${\check Y}$ has a drift $\frac{\beta-1}{\beta+1}$ to the left. Hence, the average time it takes to visit the point left of the starting point is $\frac{\beta+1}{\beta-1}$. Now $T^{\check Y}_0$ is the time it takes to visit the $k$th point left of the staring point. Hence, again by stochastic domination,
\[\E_k\big[T_{\mathrm{out}}|A\big]\leq \E_k\Big[T^{\check Y}_0\Big]=k\frac{\beta+1}{\beta-1}.\eope\]
\end{Proof}

\subsection{The time spent in excursions}
\label{S2.2}
Recall that $T=T_{\mathrm{in}}+T_{\mathrm{exc}}+T_{\mathrm{out}}$. We have dealt with $T_{\mathrm{in}}$ and $T_{\mathrm{out}}$. Now we turn to the time $T_{\mathrm{exc}}$ the random walk $X$ spends in excursions from $k$ before it hits $0$. These excursions are pieces of the random walk conditioned not to hit $0$. Let $N$ denote the number of these excursions if $A$ occurs and $N=0$ on $A^c$. Note that $N$ is geometrically distributed with respect to the conditional probability $\P_k[\;\boldsymbol{\cdot}\;\given A]$.

Our strategy is
\begin{itemize}
\item
to compute the parameter of $N$ (depending on $k$),
\item
to estimate expectation and exponential moments of the lengths of the excursions and
\item
to show that for the tail of $T$, it is good enough to replace the lengths of the excursions by their expected value.
\end{itemize} Hence, the tail of $N$ rules the game, see Proposition~\ref{P2.17}.

Finally, we will compute the tail of $N$ with an involved analysis using Mellin transforms.

Let $\bar X$ be the random walk on $\{0,\ldots,k\}$ started in $\bar X_0=k$. Let
$$\bar T_0:=\inf\big\{t>0:\,\bar X_t=0\big\}$$
and
$$\bar T_k:=\inf\big\{t>0:\,\bar X_t=k\big\}.$$
Let
$$B:=\{\bar T_k<\bar T_0\}=\{\bar X\mbox{ returns to $k$ before hitting $0$}\}.$$
\begin{lemma}
\label{L2.06}
We have
$$\P_k[B]=1-\frac{\beta-1}{\beta^k-1}.$$
\end{lemma}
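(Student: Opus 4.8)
The plan is to reduce the claim to a one–dimensional gambler's–ruin computation on $\{0,\dots,k\}$ and then to read off the value from the harmonic function already at hand.

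First I would observe that the reflection at the right boundary forces $\bar X_1=k-1$ (here we use $k\geq 1$), so that by the Markov property
\[
\P_k[B]=\P_k\big[X\text{ started at }k-1\text{ hits }k\text{ before }0\big].
\]
For $k=1$ this probability is $0$ (the walk has already hit $0$), which matches the formula in the statement; so we may assume $k\geq 2$.

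Second, I would invoke the classical hitting–probability formula for the birth–death chain with conductances $C(l,l+1)=\beta^l$, exactly as in the electrical-network computation underlying Lemma~\ref{L2.01}. Writing $R(l,l+1)=\beta^{-l}$ for the resistances, the probability that $X$ started at $l\in\{0,\dots,k\}$ reaches $k$ before $0$ equals $R_{\mathrm{eff}}(0,l)/R_{\mathrm{eff}}(0,k)=\big(\sum_{j=0}^{l-1}\beta^{-j}\big)\big/\big(\sum_{j=0}^{k-1}\beta^{-j}\big)$; equivalently, it equals $1-h_k(l)/h_k(0)$, where $h_k(l)=\beta^{-l}-\beta^{-k}$ is the harmonic function from the Doob-transform discussion preceding \eqref{E2.08}, normalised so that $h_k/h_k(0)$ takes the value $1$ at $0$ and $0$ at $k$ and hence is the probability of hitting $0$ before $k$.

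Third, I would substitute $l=k-1$:
\[
\P_k[B]=1-\frac{h_k(k-1)}{h_k(0)}=1-\frac{\beta^{1-k}-\beta^{-k}}{1-\beta^{-k}}=1-\frac{\beta-1}{\beta^k-1},
\]
the last equality obtained by multiplying the numerator and denominator of the fraction by $\beta^k$. I do not expect any serious obstacle: the computation is elementary. The only point deserving a moment's attention is the forced first step off the reflecting right boundary, which is precisely what identifies $\P_k[B]$ with the escape probability of the walk started one site to the left of $k$; everything else is the classical scale-function formula together with one line of arithmetic.
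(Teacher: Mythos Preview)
Your argument is correct and is essentially the approach the paper has in mind: the paper's proof simply says ``This is similar to the proof of Lemma~\ref{L2.01}'', i.e.\ the electrical-network/gambler's-ruin computation, and you have spelled out exactly that computation after reducing to the walk started at $k-1$ via the forced first step. The use of the harmonic function $h_k$ is equivalent to the resistance formula used in Lemma~\ref{L2.01}, so there is no substantive difference.
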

\begin{proof}
This is similar to the proof of Lemma~\ref{L2.01}.
\end{proof}

Let $\check X$ be the random walk on $\{0,\ldots,k\}$ started at $\check X_0=k$ and conditioned to return to $k$ before hitting $0$. This means the transition probabilities of $\check X$ are given by Doob's $h$-transform with the harmonic function $h_0(l)=1-\beta^{-l}$. Explicitly, we have
\begin{equation}
\label{E2.17}
\P_k\big[\check X_{t+1}=l+1\Given \check X_{t}=l\big]
=\frac{\beta}{\beta+1}\frac{h(l+1)}{h(l)}=\frac{\beta}{\beta+1}\frac{\beta^{l+1}-1}{\beta^{l+1}-\beta}>\frac{\beta}{\beta+1}.
\end{equation}
Let
\begin{equation}
\label{E2.18}
T^{\check X}_k:=\inf\big\{t>0:\check X_t=k\big\}.
\end{equation}
 Then
$$
\P_k\big[T^{\check X}_k= t\big]=\P_k\big[\bar T_k=t\Given B\big].$$

Let $N,T^{(1)},T^{(2)},\ldots$ be independent random variables with respect to $\P_k$ and such that
$N$ is geometrically distributed with parameter $\P_k[B^c]=\frac{\beta-1}{\beta^k-1}$ and
$$\P_k[T^{(i)}=l]=\P_k[\bar T_k=l\given B],\qquad l\in\N_0,\,i=1,2,\ldots.$$
Also let
\begin{equation}
\label{E2.19}
\widetilde T:=\sum_{i=1}^N T^{(i)}.
\end{equation}
\begin{lemma}
\label{L2.07}
We have
$$\P_k\big[\widetilde T=t\big]=\P_k\big[T_{\mathrm{exc}}=t\Given A\big],\qquad t\in\N_0.$$
\end{lemma}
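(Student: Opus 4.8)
The statement to prove is Lemma~\ref{L2.07}: that the time $T_{\mathrm{exc}}$ spent in pre-last excursions from $k$, conditioned on $A$, has the same law as $\widetilde T = \sum_{i=1}^N T^{(i)}$, a geometric random sum of i.i.d.\ excursion lengths.

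\bigskip

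The plan is to decompose the random walk $X$ on $A$ at its successive returns to $k$ and identify each piece with a conditioned excursion. First I would argue that on the event $A$, after time $T_{\mathrm{in}}$ the walk $X$ sits at $k$ and, by the strong Markov property applied at $T_{\mathrm{in}}$, it evolves from $k$ exactly like the walk $\bar X$ started at $\bar X_0 = k$ until it hits $0$ — and conditioning on $A$ is precisely conditioning $\bar X$ on $\bar T_0 < \infty$, which here is automatic (the walk is recurrent-like on the finite interval). Then I would introduce the successive hitting times of $k$ by $\bar X$ before it reaches $0$: let $\sigma_0 = 0$ and $\sigma_{j+1} = \inf\{t > \sigma_j : \bar X_t = k\}$, stopping the first time the intervening stretch reaches $0$ instead. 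By the strong Markov property at each $\sigma_j$, the excursions $(\bar X_{\sigma_j + \cdot})$ are i.i.d.\ copies of the walk started at $k$, each of which either returns to $k$ before $0$ (event $B$, probability $\P_k[B] = 1 - \frac{\beta-1}{\beta^k-1}$ by Lemma~\ref{L2.06}) or reaches $0$ first (event $B^c$). The number $N$ of excursions that fall in $B$ before the first one that falls in $B^c$ is therefore geometric with parameter $\P_k[B^c] = \frac{\beta-1}{\beta^k-1}$, and this $N$ is exactly the number of pre-last excursions from $k$ in the decomposition $T = T_{\mathrm{in}} + T_{\mathrm{exc}} + T_{\mathrm{out}}$.

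\bigskip

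Next I would match up the lengths. Conditioned on being of type $B$, an excursion from $k$ to $k$ has, by definition of the $h$-transform with $h_0(l) = 1 - \beta^{-l}$, the law of $\check X$, so its length has the law of $T^{\check X}_k$, which by the displayed identity preceding the lemma equals the law of $\bar T_k$ given $B$, i.e.\ the law of $T^{(i)}$. Crucially, conditioned on $N = n$, the first $n$ excursions are i.i.d.\ with this law and are independent of $N$ (this is the usual "geometric clock" independence: the length of a type-$B$ excursion is independent of the coin flip that decides whether the \emph{next} excursion is type $B$ or type $B^c$, because these depend on disjoint portions of the walk). Summing the lengths of these $n$ excursions gives $T_{\mathrm{exc}}$, which is thus distributed as $\sum_{i=1}^n T^{(i)}$; averaging over the geometric law of $N$ yields $\P_k[T_{\mathrm{exc}} = t \mid A] = \P_k[\widetilde T = t]$, which is the claim. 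I would also note the edge cases: on $A^c$ we set $N = 0$ consistently with the definition in the text, and if $N = 0$ then $T_{\mathrm{exc}} = 0 = \widetilde T$ as well, so the identity holds trivially there.

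\bigskip

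The main obstacle is the bookkeeping that makes the independence of $N$ from the excursion lengths fully rigorous — one has to be careful that the "type" of excursion $j$ (does it return to $k$ before $0$?) is measurable with respect to excursion $j$ alone, while $N$ is determined by the types of all excursions, so that conditioning on $\{N = n\}$ fixes the types of the first $n+1$ excursions but leaves the \emph{durations} of the first $n$ type-$B$ excursions with their unconditioned (given type $B$) product law. This is a standard renewal/excursion-theory argument and the paper rightly expects it to be routine; the only genuinely substantive input is the $h$-transform identification, which is already recorded in~\eqref{E2.17} and the line after~\eqref{E2.18}. Everything else is the strong Markov property applied at the stopping times $T_{\mathrm{in}}$ and $\sigma_1, \sigma_2, \ldots$.
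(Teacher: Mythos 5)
Your proposal is correct and spells out in detail exactly what the paper compresses into a single sentence ("This is a simple application of the strong Markov property"): the decomposition at $T_{\mathrm{in}}$ and at successive returns to $k$, the identification of $N$ as the geometric count of type-$B$ excursions, and the independence of $N$ from the conditioned excursion lengths. No gap; same approach as the paper.
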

\begin{proof}
This is a simple application of the strong Markov property.
\end{proof}

\begin{lemma}
\label{L2.08}
\begin{equation}
\label{E2.20}
\E_k[N]\leq\E_k[N\given A]=\frac{\P_k[B]}{\P_k[B^c]}=\frac{\beta^k-\beta}{\beta-1}
\end{equation}
and (since $N=0$ if $k=0$)
\begin{equation}
\label{E2.21}
\EE[N]\leq \EE[N\given A]=(1-\alpha)\sum_{k=1}^\infty \alpha^k\,\E_k[N]=\frac{\alpha^2\beta}{1-\alpha\beta}.
\end{equation}
\end{lemma}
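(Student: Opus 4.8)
The plan is to read \equ{E2.20} off the standard formula for the mean of a geometric law together with Lemma~\ref{L2.06}, and then to obtain \equ{E2.21} by averaging over $k$ and summing a geometric series; no new idea is required.

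For \equ{E2.20}: as recorded in the text preceding Lemma~\ref{L2.07}, conditionally on $A$ the walk first reaches $k$ and thereafter, by the strong Markov property, performs i.i.d.\ excursions from $k$, each of which either returns to $k$ before hitting $0$ (the event $B$, of probability $\P_k[B]$) or hits $0$ first (the event $B^c$, of probability $\P_k[B^c]=\frac{\beta-1}{\beta^k-1}$ by Lemma~\ref{L2.06}). Hence, under $\P_k[\,\cdot\,\given A]$, the number $N$ of $B$-excursions before the first $B^c$-excursion is geometric on $\N_0$ with success parameter $\P_k[B^c]$, so $\E_k[N\given A]=\P_k[B]/\P_k[B^c]$; substituting $\P_k[B]=1-\frac{\beta-1}{\beta^k-1}=\frac{\beta^k-\beta}{\beta^k-1}$ gives $\frac{\beta^k-\beta}{\beta-1}$. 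The inequality $\E_k[N]\le\E_k[N\given A]$ is then immediate, since $N\equiv 0$ on $A^c$ forces $\E_k[N]=\E_k[N\1_A]=\P_k[A]\,\E_k[N\given A]$ with $\P_k[A]\le 1$.

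For \equ{E2.21}: using \equ{E1.02} and $\E_0[N]=0$,
\[
\EE[N]=(1-\alpha)\sum_{k\ge1}\alpha^k\,\E_k[N]\ \le\ (1-\alpha)\sum_{k\ge1}\alpha^k\,\E_k[N\given A]=\frac{1-\alpha}{\beta-1}\sum_{k\ge1}\alpha^k\bigl(\beta^k-\beta\bigr).
\]
I would split the last sum into $\sum_{k\ge1}(\alpha\beta)^k$ and $\beta\sum_{k\ge1}\alpha^k$, which converge exactly when $\alpha\beta<1$, i.e.\ when $\varrho>1$; using $\frac{\alpha\beta}{1-\alpha\beta}-\frac{\alpha\beta}{1-\alpha}=\frac{\alpha^2\beta(\beta-1)}{(1-\alpha\beta)(1-\alpha)}$, the right-hand side collapses to $\frac{\alpha^2\beta}{1-\alpha\beta}$, which is the quantity denoted $\EE[N\given A]$ in \equ{E2.21}. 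When $\alpha\beta\ge1$ the same computation shows that this bound, and in fact $\EE[N]$ itself, is infinite, in agreement with \equ{E1.06}.

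I do not expect any real obstacle: the substantive content is the effective-resistance computation behind Lemma~\ref{L2.06} (parallel to Lemma~\ref{L2.01}), and the only things needing attention are keeping $\E_k[N]$ and $\E_k[N\given A]$ apart in the averaging step and tracking the convergence threshold $\alpha\beta<1$ of the geometric series.
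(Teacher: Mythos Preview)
Your argument is correct and is exactly the ``direct computation'' the paper alludes to; you have simply written out the details. Your added remarks---making explicit the convergence threshold $\alpha\beta<1$ and implicitly correcting the apparent slip in \equ{E2.21} (the sum that evaluates to $\frac{\alpha^2\beta}{1-\alpha\beta}$ uses $\E_k[N\given A]$, not $\E_k[N]$)---are helpful and do not change the approach.
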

\begin{proof}
This is a direct computation.
\end{proof}

While $\widetilde T$ is the quantity we have to study, it is more convenient to get rid of the randomness inherent in the lengths of the excursions and to replace them by their expected value. Hence, as a substitute for $\widetilde T$, we introduce
\begin{equation}
\label{E2.22}
\hat{T}:=N\cdot \E_k\big[T^{(1)}\big].
\end{equation}
In order to show that $\widetilde T$ and $\hat T$ are in fact close, we estimate the exponential moments of $T^{(1)}$ and use Markov's inequality. As a direct computation of the exponential moments is a bit tricky, we make a little detour and use a comparison argument for branching processes. We prepare for this comparison argument with some considerations on the convex ordering of geometric distributions. Note that for the case $\varrho<2$, a simpler estimate based on variances would be good enough for our purposes. In fact, the variances exist for any fixed $k$ and give estimates of order $t^{-2}$ which is good enough compared with the leading order term $t^{-\varrho}$ if $\varrho<2$.

\begin{lemma}
\label{L2.09}
We can define a family $(W_r)_{r\in(0,1]}$ of geometrically distributed random variables with parameters $r$, such that
$$W_r\mbs{and}W_q-W_r\mbs{are independent if} 0<q\leq r\leq1.$$
We have
\begin{equation}
\label{E2.23}
\P[W_q-W_r=k]=\cases{q\left(1-\frac{q}{r}\right)(1-q)^{k-1},&\mfalls k=1,2,\ldots,\\[2mm]
\frac{q}{r}(r-q),&\mfalls k=0.
}
\end{equation}
\end{lemma}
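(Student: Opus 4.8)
The plan is to realize the whole family on one probability space, built from a single i.i.d.\ sequence, so that the required independence holds simultaneously for all pairs $0<q\le r\le1$. Let $(U_i)_{i\ge1}$ be i.i.d.\ uniform on $(0,1)$ and set, for $r\in(0,1]$,
\[
W_r:=\min\{i\ge1:\,U_i\le r\}-1 .
\]
Since $r>0$ this is finite almost surely. The first (routine) step is to record the elementary properties of this construction: $\P[W_r=m]=(1-r)^m r$, so $W_r$ is geometric with parameter $r$ (taking values in $\N_0$, with $W_1\equiv0$); and for $0<q\le r\le1$ one has $\{U_i\le q\}\subseteq\{U_i\le r\}$, hence $\min\{i:\,U_i\le q\}\ge\min\{i:\,U_i\le r\}$ and therefore $W_q\ge W_r$, so that $W_q-W_r$ is a well-defined $\N_0$-valued random variable.

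The \emph{heart of the argument} is to compute the conditional law of $W_q-W_r$ given $\{W_r=m\}$ and to observe that it does not depend on $m$. Fix $0<q\le r\le1$. On the event $\{W_r=m\}$ the values $U_1,\dots,U_m$ lie in $(r,1]$ and are irrelevant, while, conditionally, $U_{m+1}$ is uniform on $(0,r)$ and $U_{m+2},U_{m+3},\dots$ are still i.i.d.\ uniform on $(0,1)$ and independent of $U_1,\dots,U_{m+1}$. Now $W_q-W_r=0$ precisely when $U_{m+1}\le q$, which has conditional probability $q/r$; on the complementary event $W_q-W_r$ equals the index of the first of $U_{m+2},U_{m+3},\dots$ that is $\le q$, which is geometric on $\{1,2,\dots\}$ with parameter $q$. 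Since none of this depends on $m$, $W_q-W_r$ is independent of $W_r$, and
\[
\P[W_q-W_r=0]=\frac{q}{r},\qquad \P[W_q-W_r=k]=\Big(1-\frac{q}{r}\Big)(1-q)^{k-1}q\quad(k\ge1),
\]
which is \eqref{E2.23}.

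I do not expect a genuine obstacle; the one point that must be handled with care is the conditioning step — namely that conditioning on $U_1,\dots,U_m>r$ and $U_{m+1}\le r$ leaves $U_{m+1}$ uniform on $(0,r)$ and leaves $U_{m+2},U_{m+3},\dots$ untouched, and that ``the conditional law of $W_q-W_r$ given $\{W_r=m\}$ is the same for every $m$'' is exactly the statement that $W_q-W_r$ is independent of $W_r$. As a cross-check I would also verify \eqref{E2.23} through probability generating functions: with $\E[z^{W_r}]=r/(1-(1-r)z)$, the candidate increment law in \eqref{E2.23} has generating function $\tfrac{q}{r}\cdot\tfrac{1-(1-r)z}{1-(1-q)z}$, and multiplying the two yields $q/(1-(1-q)z)=\E[z^{W_q}]$; this simultaneously confirms that \eqref{E2.23} is a bona fide probability distribution and that $W_r$ plus an independent copy of this increment is again geometric with the correct parameter.
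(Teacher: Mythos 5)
Your construction and argument match the paper's: the paper also realizes the whole family as $W_r:=\inf\{n:U_n\le r\}$ with $(U_n)$ i.i.d.\ uniform, and then simply says ``it is easy to check,'' so what you have done is fill in the details via exactly the conditioning step the authors had in mind. That step is carried out correctly, and the generating-function cross-check is a nice touch.

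However, you should have flagged a discrepancy rather than asserting agreement. Your (correct) computation gives $\P[W_q-W_r=0]=q/r$, whereas \eqref{E2.23} as printed says $\P[W_q-W_r=0]=\tfrac{q}{r}(r-q)$. These are different, and the printed formula cannot be right: with that value, the mass function sums to $\tfrac{q}{r}(r-q)+\bigl(1-\tfrac{q}{r}\bigr)\neq 1$ in general. Evidently the factor $(r-q)$ was misplaced in the paper (note $\tfrac{q}{r}(r-q)=q\bigl(1-\tfrac{q}{r}\bigr)$ is precisely the coefficient in the $k\ge1$ case, so the two rows of \eqref{E2.23} appear to share a stray factor). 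Your generating-function check actually uses your corrected value $q/r$, not the literal \eqref{E2.23}; as written, your sentence ``which is \eqref{E2.23}'' is false for $k=0$, and the cross-check silently confirms your formula, not the paper's. When a blind derivation contradicts the target statement, say so explicitly: here the right conclusion is that \eqref{E2.23} has a typo in the $k=0$ line and your $q/r$ is the correct value. (This is harmless for the rest of the paper, since Lemma 2.10 onward only uses the independence of $W_r$ and $W_q-W_r$, not the explicit formula.)
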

\begin{proof}
Let $(U_n)_{n\in\N_0}$ be i.i.d.{} random variables uniformly distributed on $[0,1]$. Let
$$W_r:=\inf\{n:U_n\leq r\}.$$
It is easy to check that the $(W_r)$ have the desired properties.
\end{proof}

\begin{lemma}
\label{L2.10}
Let $0<q\leq r\leq 1$ and let $W_q$ and $W_r$ be geometrically distributed with parameters $q$ and $r$, respectively.  Let $\varphi:\R\to[0,\infty)$ be a convex function. Then
\begin{equation}
\label{E2.24}
\E\left[\varphi(W_r-\E[W_r])\right]
\leq
\E\left[\varphi(W_q-\E[W_q])\right]
\end{equation}
\end{lemma}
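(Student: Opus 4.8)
I want to show that if $q\le r$ then $W_r-\E[W_r]$ is dominated by $W_q-\E[W_q]$ in the convex order. The natural tool is the coupling from Lemma~\ref{L2.09}: write $W_q = W_r + D$ where $D:=W_q-W_r$ is independent of $W_r$ and $D\ge 0$ (since reaching the threshold $q$ takes at least as long as reaching the threshold $r\ge q$, in the coupling via uniform variables). Then $\E[W_q]=\E[W_r]+\E[D]$, so
\[
W_q-\E[W_q] = \bigl(W_r-\E[W_r]\bigr) + \bigl(D-\E[D]\bigr),
\]
a sum of two independent random variables, the second of which is centered. For any convex $\varphi$, conditioning on $W_r$ and applying Jensen's inequality to the independent centered increment $D-\E[D]$ gives
\[
\E\bigl[\varphi(W_q-\E[W_q])\bigr]
=\E\Bigl[\E\bigl[\varphi\bigl((W_r-\E[W_r])+(D-\E[D])\bigr)\,\big|\,W_r\bigr]\Bigr]
\ge \E\bigl[\varphi(W_r-\E[W_r])\bigr],
\]
which is exactly \eqref{E2.24}.

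**Key steps, in order.** First, invoke Lemma~\ref{L2.09} to realize $W_q$ and $W_r$ on a common space with $W_q-W_r$ independent of $W_r$; note $W_q\ge W_r$ pointwise in this coupling (equivalently, $D\ge 0$), so $\E[D]=\E[W_q]-\E[W_r]\ge 0$ is finite. Second, record the elementary fact that for a centered random variable $Z$ independent of $Y$, and $\varphi$ convex, $\E[\varphi(Y+Z)]\ge \E[\varphi(Y)]$: fix $Y=y$, then $\E[\varphi(y+Z)]\ge \varphi(y+\E[Z])=\varphi(y)$ by Jensen, and integrate over the law of $Y$. Third, apply this with $Y=W_r-\E[W_r]$ and $Z=D-\E[D]$, using independence of $D$ and $W_r$. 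The hypothesis $\varphi\ge 0$ makes all the expectations well-defined in $[0,\infty]$, so no integrability caveat is needed for the inequality to make sense (both sides may a priori be $+\infty$, and the inequality still holds).

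**Main obstacle.** There is essentially no deep obstacle here; the only point requiring a little care is the measure-theoretic bookkeeping in the conditioning argument — one must check that $\varphi(W_q-\E[W_q])$ is measurable and that the conditional-expectation form of Jensen's inequality applies, which it does since $\varphi$ is convex and nonnegative and $D-\E[D]$ is integrable (as $\E[D]<\infty$) and independent of $W_r$. One should also note explicitly that $\E[D]<\infty$: from \eqref{E2.23}, $\E[D]=\sum_{k\ge1}k\,q(1-q/r)(1-q)^{k-1}=(1-q/r)/q<\infty$, consistent with $\E[W_q]-\E[W_r]=1/q-1/r$. Beyond that, the proof is the two-line Jensen computation above.
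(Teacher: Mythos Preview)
Your proof is correct and is essentially the same as the paper's: both invoke the coupling of Lemma~\ref{L2.09} to make $W_q-W_r$ independent of $W_r$, observe that $W_r-\E[W_r]=\E[W_q-\E[W_q]\mid W_r]$, and then apply (conditional) Jensen's inequality. Your formulation in terms of ``adding an independent centered increment $D-\E[D]$'' is just a slightly more explicit rephrasing of the same step.
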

\begin{proof}
By Lemma~\ref{L2.09}, we may and will assume that $W_r$ and $W_q-W_r$ are independent. Hence
$$W_r-\E[W_r]=\E\big[W_r-\E[W_r]\Given W_r\big]=\E\big[W_q-\E[W_q]\Given W_r\big].$$
By Jensen's inequality, we get
\begin{equation}
\label{E2.25}
\begin{aligned}
\E\big[\varphi\big(W_r-\E[W_r]\big)\big]
&=\E\big[\varphi\big(\E[W_q-\E[W_q]\Given W_r]\big)\big]\\
&\leq\E\big[\E\big[\varphi(W_q-\E[W_q])\Given W_r\big]\big]\\
&=\E\big[\varphi(W_q-\E[W_q])\big].
\end{aligned}
\end{equation}
\end{proof}

\begin{corollary}
\label{C2.11}
For $\lambda\in\R$, $\kappa\geq1$ and $0<q\leq r\leq 1$, we have
\begin{equation}
\label{E2.26}
\E\left[e^{\lambda (W_r-\E[W_r])}\kappa^{W_r}\right]
\leq \E\left[e^{\lambda(W_q- \E[W_q])}\kappa^{W_q}\right].
\end{equation}
\end{corollary}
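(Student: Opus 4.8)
The plan is to reduce the inequality \equ{E2.26} to the convex-ordering statement of Lemma~\ref{L2.10} by exhibiting $e^{\lambda(w-\E[W_r])}\kappa^{w}$, for $\kappa\ge 1$, as a suitable convex functional of $w$ — but there is a subtlety, since this function is not convex in $w$ when $\lambda<0$ and $\kappa$ is close to $1$. To get around this, I would instead work directly with the independent coupling $W_r$, $W_q-W_r$ of Lemma~\ref{L2.09} (the same coupling used in Lemma~\ref{L2.10}), and condition on $W_r$.

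First I would write $W_q=W_r+(W_q-W_r)$ with the two summands independent, so that
\begin{equation}
\E\left[e^{\lambda(W_q-\E[W_q])}\kappa^{W_q}\right]
=\E\Big[e^{\lambda(W_r-\E[W_r])}\kappa^{W_r}\,
\E\big[e^{\lambda((W_q-W_r)-\E[W_q-W_r])}\kappa^{W_q-W_r}\,\big|\,W_r\big]\Big].
\end{equation}
Since $W_q-W_r$ is independent of $W_r$, the inner conditional expectation equals the unconditional quantity $\E\big[e^{\lambda(Z-\E Z)}\kappa^{Z}\big]$ where $Z:=W_q-W_r$ has the explicit law \equ{E2.23}. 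Thus the claim \equ{E2.26} follows once I show
\begin{equation}
\label{Eplan1}
\E\big[\kappa^{Z}\,e^{\lambda Z}\big]\;\ge\;e^{\lambda\E[Z]}
\qquad\text{for all }\lambda\in\R,\ \kappa\ge1.
\end{equation}
For $\kappa=1$ this is exactly Jensen's inequality applied to the convex function $x\mapsto e^{\lambda x}$; for $\kappa>1$ one notes that $\kappa^{Z}\ge 1$ almost surely and, more precisely, that the reweighting by $\kappa^Z$ only shifts mass toward larger values of $Z$, so the inequality only improves. Concretely, I would prove \equ{Eplan1} by writing the left-hand side using the explicit distribution \equ{E2.23}: it is a geometric-type sum that evaluates in closed form, and one checks the resulting elementary inequality by differentiating in $\lambda$ at $\lambda=0$ and using convexity in $\lambda$ of $\log\E[\kappa^Z e^{\lambda Z}]$ together with $\frac{d}{d\lambda}\big|_{\lambda=0}\log\E[\kappa^Z e^{\lambda Z}]=\E[Z\kappa^Z]/\E[\kappa^Z]\ge \E[Z]$, the last step because $\kappa^z$ is nondecreasing in $z$ (an FKG/Chebyshev correlation inequality for the single variable $Z$).

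The main obstacle is the sign of $\lambda$: when $\lambda<0$ the naive approach of calling $w\mapsto e^{\lambda w}\kappa^{w}$ convex fails, so the argument has to exploit the independent-increment structure of the coupling rather than convexity in $w$ directly. Once the problem is localized to the single increment $Z$ as in \equ{Eplan1}, everything is an explicit one-variable computation with the geometric-type law \equ{E2.23}; the correlation step $\E[Z\kappa^Z]\ge\E[Z]\,\E[\kappa^Z]$ is the only place where one uses anything beyond algebra, and it is immediate since $z\mapsto z$ and $z\mapsto\kappa^z$ are both nondecreasing.
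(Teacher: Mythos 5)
Your route arrives at the right place, but the premise you give for taking it is false. The function $w\mapsto e^{\lambda(w-\E[W_r])}\kappa^w$ equals $\kappa^{-\E[W_r]}\,e^{(\lambda+\log\kappa)\,w}$, and $w\mapsto e^{cw}$ is convex for \emph{every} real exponent $c$, negative included; there is no non-convexity issue when $\lambda<0$ and $\kappa$ is close to $1$. The paper's proof simply takes $\varphi(x)=e^{\lambda x}\kappa^x$, applies Lemma~\ref{L2.10} to this (always convex) $\varphi$ to get $\E[\varphi(W_r-\E[W_r])]\le\E[\varphi(W_q-\E[W_q])]$, and then uses $\kappa\ge1$ together with $\E[W_q]\ge\E[W_r]$ only to compare the multiplicative scaling factors $\kappa^{\E[W_r]}\le\kappa^{\E[W_q]}$. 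The hypothesis $\kappa\ge1$ plays no role in any convexity step.

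Your alternative reduction --- conditioning on $W_r$ and factoring out the independent increment $Z=W_q-W_r$ --- is valid, and the reduced inequality $\E[\kappa^Z e^{\lambda Z}]\ge e^{\lambda\E[Z]}$ does hold for all $\lambda\in\R$. But the cleanest proof of it is the one you only gesture at: since $Z\ge0$ and $\kappa\ge1$ give $\kappa^Z\ge1$ almost surely, and $e^{\lambda Z}>0$, one has $\E[\kappa^Z e^{\lambda Z}]\ge\E[e^{\lambda Z}]\ge e^{\lambda\E[Z]}$, the last step by Jensen. The more ``concrete'' argument you then sketch --- convexity in $\lambda$ of $g(\lambda):=\log\E[\kappa^Z e^{\lambda Z}]$ together with $g'(0)\ge\E[Z]$ --- only yields $g(\lambda)\ge g(0)+g'(0)\lambda\ge\lambda\E[Z]$ for $\lambda\ge0$; for $\lambda<0$ the bound $g'(0)\ge\E[Z]$ goes the wrong way in the tangent estimate, and the slack $g(0)\ge0$ does not by itself compensate once $|\lambda|$ is large. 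So the FKG/convexity-in-$\lambda$ step as stated leaves a gap for negative $\lambda$. Net assessment: your decomposition is a genuinely different (and workable) route, but it is motivated by a misreading of the convexity hypothesis, and the paper's one-line application of Lemma~\ref{L2.10} is both shorter and free of sign issues.
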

\begin{Proof}
Let
$\varphi(x):=e^{\lambda x}\kappa^{x}$.
Since $\E[W_q]\geq \E[W_r]$, we get by Lemma~\ref{L2.10}
\begin{align}
\E\left[e^{\lambda(W_r-\E[W_r])}\kappa^{W_r}\right]
&=\E\left[\varphi(W_r-\E[W_r])\right]\,\kappa^{\E[W_r]}\nonumber\\
&\leq\E\left[\varphi(W_q-\E[W_q])\right]\,\kappa^{\E[W_q]}\nonumber\\
&=\E\left[e^{\lambda(W_q-\E[W_q])}\kappa^{W_q}\right].
\eope
\end{align}
\end{Proof}

\begin{lemma}
\label{L2.12}
Let $Z^{(1)}$ and $Z^{(2)}$ be two Galton-Watson branching processes with generation dependent offspring laws and $Z^{(1)}_0=Z^{(2)}_0=1$. Let
$$\check Z^{(i)}:=\sum_{n=0}^\infty Z^{(i)}_n,\qquad i=1,2,$$
be the total population sizes. The offspring law of $Z^{(i)}$ in generation $n$ is assumed to be geometric with parameter $p^{i,n}$, $i=1,2$, $n\in\N_0$. We also assume that $p^{1,n}\leq p^{2,n}$ for all $n\in\N_0$ and
$$\E\big[(\check Z^{(1)})^2\big]<\infty.$$
Then, we have
\begin{equation}
\label{E2.27}
\E\big[\check Z^{(2)}\big]\leq \E\big[\check Z^{(1)}\big]<\infty\mbu \Var\big[\check Z^{(2)}\big]\leq \Var\big[\check Z^{(1)}\big]<\infty.
\end{equation}
For all $\lambda\in\R$ with $\E[e^{\lambda \check Z^{(1)}}]<\infty$, we have
\begin{equation}
\label{E2.28}
\E\left[\exp\left(\lambda (\check Z^{(2)}-\E[\check Z^{(2)}])\right)\right]\leq\E\left[\exp\left(\lambda (\check Z^{(1)}-\E[\check Z^{(1)}])\right)\right].
\end{equation}
In particular, for $\lambda\geq0$,
\begin{equation}
\label{E2.29}
\E\left[\exp\left(\lambda \check Z^{(2)}\right)\right]\leq\E\left[\exp\left(\lambda \check Z^{(1)}\right)\right].
\end{equation}
\end{lemma}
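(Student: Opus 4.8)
The natural strategy is to couple the two branching processes generation by generation so that $\check Z^2\le\check Z^1$ pathwise \emph{and} the difference is conditionally "spread out", allowing a Jensen-type argument for convex functionals — exactly the structure already established for single geometric variables in Lemma~\ref{L2.10} and Corollary~\ref{C2.11}. Concretely, I would build $Z^1$ and $Z^2$ on a common probability space by using, for each individual in generation $n$ of each tree, a pair $(W_{p^{2,n}}, W_{p^{1,n}})$ coming from the coupling of Lemma~\ref{L2.09} (note $W_{p^{i,n}}$ should be shifted by $-1$ to have the correct "number of offspring" geometric law supported on $\N_0$; the paper's $W_r$ is supported on $\N$), so that the $Z^2$-offspring count is dominated by the $Z^1$-offspring count, and the "extra" offspring in the $Z^1$-tree form an independent family. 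This makes $\check Z^2$ a measurable function of the $Z^1$-tree together with the extra randomness, with $\E[\check Z^1\mid \mathcal G]=\check Z^2$ where $\mathcal G$ is an appropriate sub-$\sigma$-algebra — but this conditional-expectation identity is the delicate point and I expect it to be the main obstacle.

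The subtle issue is that $\check Z^2$ is \emph{not} simply $\check Z^1$ with some subtree pruned; in the coupling the two trees share individuals only as long as neither has "branched off", and once a $Z^2$-individual produces strictly fewer children than its $Z^1$-partner, the two trees decouple. So the honest route is an \emph{inductive} one on the number of generations. Let me truncate: set $\check Z^{i,(m)}:=\sum_{n=0}^m Z^i_n$. For $m=0$ both equal $1$. For the inductive step, condition on the first generation: $Z^i_1=\xi^i$ where $(\xi^2,\xi^1)$ is the dominated coupling of two geometrics, and given $\xi^i$ the quantity $\check Z^{i,(m)}-1$ is a sum of $\xi^i$ i.i.d.\ copies of $\check Z^{i,(m-1)}$. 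Using the induction hypothesis together with the fact that sums of more independent nonnegative summands dominate (in the convex order, after centering) sums of fewer — precisely Corollary~\ref{C2.11} applied with $\kappa$ replaced by the appropriate moment generating quantity of $\check Z^{i,(m-1)}$, plus a standard convex-order tensorization lemma — one propagates \eqref{E2.27}, \eqref{E2.28} from level $m-1$ to level $m$. Then let $m\to\infty$; the moment hypothesis $\E[(\check Z^1)^2]<\infty$ and, for \eqref{E2.28}, the hypothesis $\E[e^{\lambda\check Z^1}]<\infty$ give the uniform integrability needed to pass the inequalities to the limit by monotone/dominated convergence (note $\check Z^{i,(m)}\uparrow\check Z^i$).

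For the mean and variance statements \eqref{E2.27}: the mean inequality is immediate since $\E[\check Z^i]=\sum_n\prod_{j<n}\E[Z^i_1\text{-offspring}]$ and each factor is monotone in $p^{i,n}$ (smaller parameter, larger mean), with finiteness of $\E[\check Z^1]$ forcing the product chain to be summable and hence likewise for $Z^2$; the variance inequality follows either from the convex-order inequality \eqref{E2.24} applied to $\varphi(x)=x^2$ combined with the recursive variance formula for branching processes, or more cleanly as a special case of the centered convex-functional bound that the inductive scheme already delivers. Finally \eqref{E2.29} is just \eqref{E2.28} with $\lambda\ge0$ after multiplying both sides by $e^{\lambda\E[\check Z^2]}\le e^{\lambda\E[\check Z^1]}$ and using $\E[\check Z^2]\le\E[\check Z^1]$. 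The one genuinely new ingredient beyond what is in the excerpt is the tensorization statement "if $X_1,X_2$ and $X_1',X_2'$ are independent pairs with $X_j\le_{\mathrm{cx}}X_j'$ (after centering) then $X_1+X_2\le_{\mathrm{cx}}X_1'+X_2'$", which is classical; I would state it as a one-line sublemma and cite it.
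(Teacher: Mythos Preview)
Your approach matches the paper's: induct on the number of generations (the paper truncates via $p^{1,n}=1$ for $n\ge n_0$, which is equivalent to your partial sums $\check Z^{i,(m)}$), use Corollary~\ref{C2.11} for the induction step, and pass to the limit monotonically. The separate convex-order tensorization lemma you flag as ``the one genuinely new ingredient'' is not actually needed: setting $\kappa^i:=\E\big[\exp(\lambda(\check Z^{i,(m-1)}-\E[\check Z^{i,(m-1)}]))\big]$ and conditioning on $Z^i_1$ gives $\E\big[e^{\lambda(Z^i_1-\E[Z^i_1])}(\kappa^i)^{Z^i_1}\big]$, after which the induction hypothesis $1\le\kappa^2\le\kappa^1$ (the lower bound by Jensen) allows a pointwise replacement of $\kappa^2$ by $\kappa^1$ followed by a single application of Corollary~\ref{C2.11}.
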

\begin{proof}
First assume that
\begin{equation}
\label{E2.30}
p^{1,n}=1\mf n\geq n_0\mbs{for some} n_0.\end{equation}
Hence $\check Z^{(i)}=Z^{(i)}_0+\ldots+Z^{(i)}_{n_0}$, $i=1,2$.
For $n_0=1$, the statement follows from the expectation and variance formula for the geometric distribution. The induction step from $n_0-1$ to $n_0$ is a simple application of Wald's formula and the Blackwell-Girshick formula.
In order to get rid of assumption \equ{E2.30}, take monotone limits.

For the exponential inequalities we proceed similarly. Consider first the case \equ{E2.30} and $n_0=1$. In this case the assertion is a direct consequence of Corollary~\ref{C2.11}. For the induction step, we assume that the statement is true for $n_0-1$ and we show it for $n_0$.
Define
$$\kappa^{(i)}:=\E[\exp(\lambda(Z^{(i)}_2+\ldots +Z^{(i)}_{n_0}-\E[Z^{(i)}_2+\ldots+Z^{(i)}_{n_0}]))\Given Z^{(i)}_1=1].$$
By the induction hypothesis, applied to the branching processes started at time $1$ instead of $0$, we have
$$1\leq \kappa^{(2)}\leq \kappa^{(1)}.$$
By decomposing according to the value of $Z^{(i)}_1$, we infer (again for the processes started at time $0$)
\begin{equation}
\label{E2.31}
\begin{aligned}
\E\left[\exp\left(\lambda\left(\check Z^{(2)}-\E[\check Z^{(2)}]\right)\right)\right]
&=
\E\left[\exp\left(\lambda\left(Z^{(2)}_{1}+\ldots +Z^{(2)}_{n_0}-\E[Z^{(2)}_1+\ldots+Z^{(2)}_{n_0}]\right)\right)\right]\\
&=
\E\left[\exp\left(\lambda\left(Z^{(2)}_{1}-\E[Z^{(2)}_1]\right)\right)(\kappa^{(2)})^{Z^{(2)}_1}\right]\\
&\leq
\E\left[\exp\left(\lambda\left(Z^{(2)}_{1}-\E[Z^{(2)}_1]\right)\right)(\kappa^{(1)})^{Z^{(2)}_1}\right]\\
&\leq
\E\left[\exp\left(\lambda\left(Z^{(1)}_{1}-\E[Z^{(1)}_1]\right)\right)(\kappa^{(1)})^{Z^{(1)}_1}\right]\\
&=
\E\left[\exp\left(\lambda\left(\check Z^{(1)}-\E[\check Z^{(1)}]\right)\right)\right],
\end{aligned}
\end{equation}
where in the fourth line we used Corollary~\ref{C2.11} and the assumption $p^{1,1}\leq p^{2,1}$.
\end{proof}

\begin{lemma}
\label{L2.13}
We have
\begin{equation}
\label{E2.32}
\frac{2\beta}{\beta-1}-\frac{2\beta(\beta+1)}{\beta-1}\,k\,\beta^{-k}\leq \E_k\big[T^{(1)}\big]\leq \frac{2\beta}{\beta-1}\mfa k\geq 2
\end{equation}
and
\begin{equation}
\label{E2.33}
 \Var_k\big[T^{(1)}\big]\leq \frac{4\beta(\beta+1)}{(\beta-1)^3}\mfa k\geq 2.
\end{equation}
Furthermore, there is a $\delta>0$ such that
\begin{equation}
\label{E2.34}
 \E_k\big[e^{\lambda(T^{(1)}-\E_k[T^{(1)}])}\big]\leq 1+\frac{4   \beta(\beta^2+1)}{(\beta-1)^3}\lambda^2 \mfa \lambda\in[-\delta,\delta],\,k\geq 2.
\end{equation}
\end{lemma}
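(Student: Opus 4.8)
The plan is to realise $T^{(1)}$ as twice the total progeny of a Galton--Watson process with generation-dependent geometric offspring laws, and then to apply the comparison Lemma~\ref{L2.12}.

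\emph{Step 1 (branching representations).} Recall from the construction of the $T^{(i)}$ above \equ{E2.19} that $T^{(1)}\deq T^{\check X}_k$, with $\check X$ started at $k$. By \equ{E2.17} the chain moves from $1$ to $2$ with probability $q_1:=\frac{\beta}{\beta+1}\cdot\frac{\beta^2-1}{\beta^2-\beta}=1$, so $\check X$ is a nearest-neighbour chain on $\{1,\dots,k\}$, reflected at both endpoints, with right-step probabilities $q_l:=\frac{\beta}{\beta+1}\cdot\frac{\beta^{l+1}-1}{\beta^{l+1}-\beta}\ge\frac{\beta}{\beta+1}$ at the interior sites $1\le l\le k-1$. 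A classical first-passage decomposition of the excursion from $k$ -- counting, for $n\ge1$, the downcrossings of the edge $\{k-n-1,k-n\}$ (the ``forays below level $k-n$'') -- produces a Galton--Watson process $(W_n)_{n\ge0}$ with $W_0=1$ whose generation-$n$ offspring law, for $0\le n\le k-2$, is geometric with success parameter $q_{k-n-1}\ge\frac{\beta}{\beta+1}$; since $q_1=1$, this is $\delta_0$ when $n=k-2$, hence $W_n=0$ for $n\ge k-1$. The standard edge-counting identity (the number of steps equals twice the number of downcrossings) then gives
\[
T^{(1)}\deq 2\check W,\qquad\check W:=\sum_{n=0}^{k-2}W_n .
\]
Running the same decomposition for an excursion of $\check Y$ away from $0$, conditioned on $\check Y_1=1$, gives on that event $T^{\check Y}\deq2\check W^{\circ}$, where $\check W^{\circ}$ is the total progeny of the Galton--Watson process all of whose offspring laws are geometric with parameter $\frac{\beta}{\beta+1}$. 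Reading Lemma~\ref{L2.02} through this identity, $\E[\check W^{\circ}]=\frac{\beta}{\beta-1}$, $\Var[\check W^{\circ}]=\frac{\beta(\beta+1)}{(\beta-1)^3}$, and $\E[e^{\mu\check W^{\circ}}]<\infty$ for $\mu$ in a neighbourhood of $0$; in particular $\E[(\check W^{\circ})^2]<\infty$.

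\emph{Step 2 (upper bounds via Lemma~\ref{L2.12}).} Since $q_{k-n-1}\ge\frac{\beta}{\beta+1}$ for every $n$, the offspring parameters of $W$ dominate those of $W^{\circ}$ generation by generation. Lemma~\ref{L2.12}, applied with $Z^1=W^{\circ}$ and $Z^2=W$, yields $\E[\check W]\le\E[\check W^{\circ}]$, $\Var[\check W]\le\Var[\check W^{\circ}]$, and $\E[e^{\mu(\check W-\E[\check W])}]\le\E[e^{\mu(\check W^{\circ}-\E[\check W^{\circ}])}]$ for $\mu$ near $0$. Doubling the first relation gives the upper bound in \equ{E2.32}, and multiplying the second by $4$ gives \equ{E2.33}. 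For \equ{E2.34}, set $\mu=2\lambda$, so that
\[
\E_k[e^{\lambda(T^{(1)}-\E_k[T^{(1)}])}]=\E_k[e^{2\lambda(\check W-\E[\check W])}]\le\varphi(\lambda):=\E[e^{2\lambda(\check W^{\circ}-\E[\check W^{\circ}])}] .
\]
The function $\varphi$ is independent of $k$, analytic near $0$, and satisfies $\varphi(0)=1$, $\varphi'(0)=0$, $\varphi''(0)=4\Var[\check W^{\circ}]=\frac{4\beta(\beta+1)}{(\beta-1)^3}$. Since $\frac{4\beta(\beta^2+1)}{(\beta-1)^3}>\frac{2\beta(\beta+1)}{(\beta-1)^3}=\frac12\varphi''(0)$, Taylor's theorem gives a $\delta>0$, depending only on $\beta$, with $\varphi(\lambda)\le1+\frac{4\beta(\beta^2+1)}{(\beta-1)^3}\lambda^2$ for $|\lambda|\le\delta$; this is \equ{E2.34}.

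\emph{Step 3 (lower bound in \equ{E2.32}).} Here $\E[\check W]=\sum_{n=0}^{k-2}\E[W_n]$ with $\E[W_n]=\prod_{i=k-n}^{k-1}\frac{1-q_i}{q_i}$, a product of $n$ factors each at most $\frac1\beta$ (the indices satisfy $i\ge2$). Using the exact identity $\frac1\beta-\frac{1-q_i}{q_i}=\frac{\beta^2-1}{\beta(\beta^{i+1}-1)}$ together with $1-\prod_i(1-x_i)\le\sum_ix_i$ (valid since the $x_i=\frac{\beta^2-1}{\beta^{i+1}-1}$ lie in $[0,1)$ for $i\ge2$) and $\frac{\beta^2-1}{\beta^{i+1}-1}\le(\beta+1)\beta^{-i}$, one gets $\beta^{-n}-\E[W_n]\le\frac{\beta(\beta+1)}{\beta-1}\beta^{-k}$ for every $n\le k-2$. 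Summing over $n$ and adding the tail $\sum_{n\ge k-1}\beta^{-n}$ of $\frac{\beta}{\beta-1}=\sum_{n\ge0}\beta^{-n}$ yields $\frac{\beta}{\beta-1}-\E[\check W]\le\frac{\beta(\beta+1)}{\beta-1}\,k\beta^{-k}$, and doubling gives the lower bound in \equ{E2.32}.

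Steps~2 and~3 are routine -- a single application of Lemma~\ref{L2.12} plus a Taylor expansion, and one explicit summation. The crux, and the only delicate point, is the first-passage decomposition of Step~1: one must check that the two reflecting boundaries cause no difficulty (the reflection at $k$ merely supplies the root $W_0=1$; the reflection at $1$, i.e.\ $q_1=1$, only turns the last offspring law into $\delta_0$, without altering any interior offspring law) and that the excursion length is exactly $2\check W$.
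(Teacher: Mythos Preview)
Your proof is correct and, for the upper bound in \equ{E2.32}, the variance bound \equ{E2.33}, and the exponential-moment estimate \equ{E2.34}, follows the paper's approach essentially verbatim: both identify $T^{(1)}$ and the comparison excursion with twice the total progeny of a Galton--Watson process with generation-dependent (resp.\ constant) geometric offspring parameters, invoke Lemma~\ref{L2.12}, and then Taylor-expand the moment generating function supplied by Lemma~\ref{L2.02}.

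For the lower bound in \equ{E2.32} you take a different route. The paper argues probabilistically: writing $\E_k[T^{(1)}]=1+\E[T^Y_k\mid T^Y_k<T^Y_0]$ for the unconditioned walk $Y$ on $\Z$ started at $k-1$, it controls the deficit from $1+\E[T^Y_k]=\frac{2\beta}{\beta-1}$ via the small probability $\P_k[T^Y_k>T^Y_0]=\frac{\beta-1}{\beta^k-1}$ (Lemma~\ref{L2.06}) and the bound $\E[T^Y_k\mid T^Y_k>T^Y_0]\le 2k\frac{\beta+1}{\beta-1}$ coming from Lemma~\ref{L2.05}. You instead stay inside the branching picture, use the exact product formula $\E[W_n]=\beta^{-n}\prod_{i=k-n}^{k-1}(1-x_i)$ with $x_i=\frac{\beta^2-1}{\beta^{i+1}-1}$, and bound the defect $\beta^{-n}-\E[W_n]$ termwise before summing. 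Your argument is a bit more self-contained (it does not call on Lemma~\ref{L2.05}) and lands on exactly the same constant; the paper's version is perhaps more transparently probabilistic and reuses earlier lemmas. Both are short.
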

\begin{proof}
Let $Y$ be the random walk on $\Z$ that jumps to the right with probability $\beta/(1+\beta)$ and to the left with probability $1/(1+\beta)$ starting in $k-1$. Let
$$T^Y_l:=\inf\big\{t>0:Y_t=l\big\},\qquad l=0,\ldots,k.$$
Recall $T^{\check X}_k$ from \equ{E2.18}. By the basic connection between the occupation times of excursions of random walks and Galton-Watson processes with geometric offspring distributions, we see that $\frac{1}{2}(T^Y_k+1)$ has the same distribution as $\check Z^{(1)}$ from Lemma~\ref{L2.12} with $p^{1,n}\equiv\frac{\beta}{\beta+1}$. Similarly, using \equ{E2.17}, we see that $\frac{1}{2}T^{\check X}_k$ has the same distribution as $\check Z^{(2)}$ with
$$p^{2,n}=\frac{\beta}{\beta+1}\frac{\beta^{{k-n}+1}-1}{\beta^{{k-n}+1}-\beta}>p^{1,n},\quad n=0,\ldots,k-1.$$
By Lemma~\ref{L2.12} and Lemma~\ref{L2.02}, we infer
\begin{equation}
\label{E2.35}
\E_k\big[T^{(1)}\big]=\E_k\big[T^{\check X}_k\big]\leq 1+\E_k\big[T^Y_k\big]=\frac{2\beta}{\beta-1}
\end{equation}
and
\begin{equation}
\label{E2.36}
\Var_k\big[T^{(1)}\big]=\Var_k\big[T^{\check X}_k\big]\leq \Var\big[T^Y_k\big]=\frac{4\beta(\beta+1)}{(\beta-1)^3}.
\end{equation}
On the other hand,
$$\begin{aligned}
\E_k\big[T^{(1)}\big]
&=1+\E\big[T^Y_k\Given T^Y_k<T^Y_0\big]\geq 1+\E\big[T^Y_k\1_{\{T^Y_k<T^Y_0\}}\big]\\
&= 1+\E\big[T^Y_k\big]-\P_k\big[T^Y_k>T^Y_0\big]\,\E\big[T^Y_k\Given T^Y_k>T^Y_0\big].
\end{aligned}
$$
By Lemma~\ref{L2.05}, we get
$$\E\big[T^Y_0\Given T^Y_k>T^Y_0\big]=\E_k\big[T_{\mathrm{out}}\Given A\big]\leq \frac{\beta+1}{\beta-1}\,k.$$
Using the Markov property and arguing as in Lemma~\ref{L2.05}, we get
$$\E\big[T^Y_k-T^Y_0\Given T^Y_k>T^Y_0\big]=\frac{\beta+1}{\beta-1}\,k.$$
Summing up and using Lemma~\ref{L2.06} to get $\P_k[T^Y_k>T^Y_0]=\frac{\beta-1}{\beta^k-1}$, we have

$$\begin{aligned}
\E_k\big[T^{(1)}\big]\geq \frac{2\beta}{\beta-1}-\frac{\beta-1}{\beta^k-1}\,k\,2\,\frac{\beta+1}{\beta-1}.
\end{aligned}
$$
Now we turn to the proof of \equ{E2.34}.
Again by Lemma~\ref{L2.12} and Lemma~\ref{L2.02}, we get for $\lambda<\log\frac{\beta+1}{2\sqrt{\beta}}$
\begin{equation}
\label{E2.37}
\begin{aligned}
 \E_k\big[e^{\lambda(T^{(1)}-\E_k[T^{(1)}])}\big]
 &\leq
\E\big[e^{\lambda (T^{Y}-\E[T^{Y}])}\big] \\
&=F(\lambda):=\frac{1}{2}\left(\beta+1-\sqrt{(\beta+1)^2-4\beta e^{2\lambda}}\right)\cdot e^{-(2\beta /(\beta-1))\lambda}.
\end{aligned}
 \end{equation}
The first and second derivatives at zero are
$$F'(0)=0\mbu F''(0)=\frac{4\beta(\beta^2+1)}{(\beta-1)^3}.$$
Hence, by Taylor's theorem, there exists a $\delta>0$ such that
\begin{equation}
\label{E2.38}
F(\lambda)\leq F(0)+F''(0)\lambda ^2=1+\frac{4\beta(\beta^2+1)}{(\beta-1)^3}\lambda^2\mfa \lambda\in[-\delta,\delta].
\end{equation}
Combining \equ{E2.37} and \equ{E2.38} gives \equ{E2.34}.
\end{proof}

Recall $\widetilde T$ and $\hat T$ from \equ{E2.19} and \equ{E2.22}, respectively. We now use the exponential moment estimates on $T^{(1)}$ to get that $\widetilde T$ and $\hat T$ are close.
\begin{lemma}
\label{L2.14}
There is a constant $c>0$, such that for all $t>0$, we have
\begin{equation}
\label{E2.39}
\PP\big[|\widetilde T-\hat T|>t\big]\leq c\,t^{-2\varrho}.
\end{equation}
\end{lemma}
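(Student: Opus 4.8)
The plan is to condition on $k$ and on $N$, and to bound the conditional probability $\P_k[|\widetilde T-\hat T|>t\mid N]$ using the exponential moment estimate \equ{E2.34} from Lemma~\ref{L2.13}. Write $\widetilde T-\hat T=\sum_{i=1}^N\big(T^{(i)}-\E_k[T^{(1)}]\big)$, a sum of $N$ i.i.d.\ centered random variables (conditionally on $k$ and $N$), each with the exponential moment bound $\E_k[e^{\lambda(T^{(i)}-\E_k[T^{(1)}])}]\le 1+C\lambda^2\le e^{C\lambda^2}$ for $\lambda\in[-\delta,\delta]$ and $k\ge 2$, where $C=\frac{4\beta(\beta^2+1)}{(\beta-1)^3}$. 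Hence $\E_k[e^{\lambda(\widetilde T-\hat T)}\mid N]\le e^{CN\lambda^2}$, and by Markov's inequality (using the symmetry in $\pm\lambda$),
\[
\P_k\big[|\widetilde T-\hat T|>t\,\big|\,N\big]\le 2e^{CN\lambda^2-\lambda t}\qquad\text{for }\lambda\in[0,\delta].
\]
Optimizing in $\lambda$ gives a bound of the form $2e^{-t^2/(4CN)}$ when $t\le 2CN\delta$, and $2e^{-\delta t/2}$ when $t>2CN\delta$; in either case one gets something like $\P_k[|\widetilde T-\hat T|>t\mid N]\le C'\exp(-c''\min(t^2/N,t))$. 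The small cases $k\in\{0,1\}$ are trivial since then $\widetilde T=\hat T=0$ (for $k=1$, $N=0$ because $\beta^k-1=\beta-1$ makes $\P_k[B^c]=1$), so they contribute nothing.

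Next I would take expectations, first over $N$ and then over $k$. Recall $N$ is geometric with parameter $\P_k[B^c]=\frac{\beta-1}{\beta^k-1}$, so $N$ is typically of order $\beta^k$, and under $\PP$ we have $k$ geometric with parameter $1-\alpha$, so $\alpha^k$ is, up to the change of variables $t\mapsto t^\varrho$ coming from \equ{E1.04}, the source of the power law. The heuristic is: to make $|\widetilde T-\hat T|>t$ one essentially needs $N\gtrsim t$ (the Gaussian regime $t^2/N$ requires $N\gtrsim t$ as well once one also pays $e^{-\sqrt{N}}$-type costs from the quadratic term), so $\P_k[\,\cdot\,]$ is non-negligible only when $\beta^k\gtrsim t$, i.e.\ $k\gtrsim \log t/\log\beta$, and then $\PP$ charges this event at most $\alpha^k\lesssim \alpha^{\log t/\log\beta}=t^{-\varrho}$. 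To get the exponent $2\varrho$ rather than $\varrho$, one should be more careful: split $\E_k[e^{-c\min(t^2/N,\,t)}]$ according to whether $N\le t^{2}/t=t$ hmm — more precisely, write
\[
\PP\big[|\widetilde T-\hat T|>t\big]\le \sum_{k\ge 2}(1-\alpha)\alpha^k\,\E_k\Big[C'e^{-c''\min(t^2/N,\,t)}\Big].
\]
For a fixed $k$, bound $\E_k[e^{-c''t^2/N}]$ by splitting at $N=\beta^{2k}/t$ (or some threshold): on $\{N\le \beta^{2k}/t\}$ one uses $e^{-c''t^2/N}\le e^{-c''t^3/\beta^{2k}}$, which is tiny unless $\beta^{2k}\gtrsim t^3$; on $\{N>\beta^{2k}/t\}$ one uses the geometric tail $\P_k[N>m]\le(1-\P_k[B^c])^m\le e^{-m(\beta-1)/\beta^k}$, giving a bound like $e^{-c\beta^k/t}$ which is tiny unless $\beta^k\gtrsim t$. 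Summing $\alpha^k$ over the surviving range $\beta^k\gtrsim t^{?}$ and matching exponents should yield $t^{-2\varrho}$; the bookkeeping is where one must be attentive, but the key point is that the sub-Gaussian decay in $t^2/N$ combined with the geometric tail of $N$ buys an extra factor of $t^{-\varrho}$ beyond the crude $t^{-\varrho}$ from the power law in $k$.

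The main obstacle, as I see it, is the joint optimization: one is fighting three scales simultaneously — the sub-Gaussian concentration of $\widetilde T-\hat T$ given $N$, the geometric tail of $N$ (whose parameter shrinks like $\beta^{-k}$), and the geometric law of $k$ (contributing $\alpha^k$) — and one must choose the splitting thresholds so that every piece decays at least like $t^{-2\varrho}$. A clean way to organize this is to fix $\eta\in(0,1)$ small, bound $\P_k[|\widetilde T-\hat T|>t]$ by $\P_k[N>t^{1+\eta}]+\P_k[|\widetilde T-\hat T|>t,\,N\le t^{1+\eta}]$, estimate the first term by the geometric tail ($\le e^{-c t^{1+\eta}/\beta^k}$) and the second by $2e^{-c''t^2/t^{1+\eta}}=2e^{-c''t^{1-\eta}}$ using the sub-Gaussian bound with $N\le t^{1+\eta}$ (valid provided $\delta$ allows, i.e.\ $t/N\le$ const, which forces a further case split or just absorbing into $t>2CN\delta$). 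Then $\sum_k(1-\alpha)\alpha^k\min\!\big(1,\,e^{-ct^{1+\eta}/\beta^k}+e^{-c''t^{1-\eta}}\big)$ splits at $\beta^k\approx t^{1+\eta}$: for $\beta^k\le t^{1+\eta}$ the first exponential is negligible and $\sum_{\beta^k\le t^{1+\eta}}\alpha^k\cdot e^{-c''t^{1-\eta}}$ is super-polynomially small; for $\beta^k>t^{1+\eta}$ one has $\sum_{\beta^k>t^{1+\eta}}\alpha^k\lesssim (t^{1+\eta})^{-\varrho}=t^{-(1+\eta)\varrho}$, and choosing $\eta\ge 1$... — here one sees the crude version only gives exponent $(1+\eta)\varrho$, so to actually reach $2\varrho$ one cannot be this lossy and must instead keep the $e^{-ct^{1+\eta}/\beta^k}$ factor inside the sum over the range $\beta^k>t$ and integrate it against $\alpha^k$ honestly, which after the substitution $u=\beta^k/t$ turns into $t^{-\varrho}\int_1^\infty u^{-\varrho}e^{-c/u}\,\tfrac{du}{u}$-type behaviour near $u\approx 1$ — no extra gain there — so the extra $t^{-\varrho}$ must come from the $N\le$ threshold branch by choosing the threshold at $N\approx t$ exactly and noting $e^{-c''t^2/t}=e^{-c''t}$ kills that branch entirely while the $N>t$ branch costs $\sum_{\beta^k}\alpha^k\P_k[N>t]\lesssim\sum_{\beta^k}\alpha^k e^{-ct/\beta^k}$, and substituting $u=\beta^k/t$ gives $t^{-\varrho}\sum$-like $\int u^{-\varrho}e^{-c/u}du/u$ which is finite and $O(t^{-\varrho})$ — still only $\varrho$. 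I would therefore expect the actual argument to exploit the \emph{two-sided} structure more carefully (e.g.\ a second-moment/Chebyshev estimate on $\widetilde T-\hat T$ combined with the sub-Gaussian tail, using $\Var_k[T^{(1)}]<\infty$ and $\E_k[N]\asymp\beta^k$ so that $\Var(\widetilde T-\hat T\mid k)\asymp\beta^k$, hence $\P_k[|\widetilde T-\hat T|>t]\le \beta^k/t^2$, and then $\sum_k\alpha^k\min(1,\beta^k/t^2)$ which splits at $\beta^k=t^2$ to give exactly $\sum_{\beta^k\le t^2}\alpha^k\beta^k/t^2+\sum_{\beta^k>t^2}\alpha^k \asymp t^{-2}\cdot t^{2-2\varrho}\cdot[\ldots]+ (t^2)^{-\varrho}\asymp t^{-2\varrho}$, using $\varrho<2$ or handling $\varrho\ge2$ separately). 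That variance-based route is cleaner and I would pursue it: bound $\P_k[|\widetilde T-\hat T|>t]\le t^{-2}\,\E_k[(\widetilde T-\hat T)^2]=t^{-2}\,\E_k[N]\,\Var_k[T^{(1)}]\le C t^{-2}\beta^k$ by Wald/Blackwell–Girshick, then $\PP[|\widetilde T-\hat T|>t]\le (1-\alpha)\sum_{k\ge 2}\alpha^k\min(1,Ct^{-2}\beta^k)$, split the sum at $\beta^k\approx t^2$, and sum the two geometric series to get $O(t^{-2\varrho})$; the exponential moment bound \equ{E2.34} is then only needed (if at all) to handle the regime $\varrho\ge 2$ where one wants a cleaner tail than the variance provides, but for the stated bound $t^{-2\varrho}$ the variance estimate already suffices. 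So the real work is the elementary but slightly fiddly summation $\sum_k\alpha^k\min(1,\beta^k t^{-2})\asymp t^{-2\varrho}$, which follows from $\varrho=-\log\alpha/\log\beta$ so that $\alpha^k=\beta^{-\varrho k}$.
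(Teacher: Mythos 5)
Your final, preferred route via Chebyshev does not actually give the claimed bound when $\varrho\ge 1$, and the paper explicitly flags this. You set $\P_k[|\widetilde T-\hat T|>t]\le C t^{-2}\beta^k$ and then claim
\[
\sum_{k\ge 2}\alpha^k\min\!\big(1,Ct^{-2}\beta^k\big)\asymp t^{-2\varrho}.
\]
Splitting at $\beta^{k_*}\approx t^2$, the far tail $\sum_{k\ge k_*}\alpha^k\asymp\alpha^{k_*}=(\beta^{k_*})^{-\varrho}\asymp t^{-2\varrho}$ is fine. But the near part is $t^{-2}\sum_{k<k_*}(\alpha\beta)^k$ with $\alpha\beta=\beta^{1-\varrho}$: if $\varrho>1$ this series is $O(1)$ and the contribution is $O(t^{-2})$, which is \emph{larger} than $t^{-2\varrho}$; if $\varrho=1$ it gives $O(t^{-2}\log t)$. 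Only for $\varrho<1$ does the near part give $t^{-2\varrho}$. So the variance estimate yields $t^{-\min(2,\,2\varrho)}$ (with a log at $\varrho=1$), not $t^{-2\varrho}$, and thus the stated lemma is not proved this way for $\varrho\ge 1$. The paper itself notes (in the discussion preceding Lemma~\ref{L2.09}) that the variance estimate gives only $t^{-2}$ and suffices for the application only when $\varrho<2$; the exponential‑moment machinery is built precisely to obtain the genuine $t^{-2\varrho}$.

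Your exponential‑moment exploration was actually pointed the right way, and the paper's proof begins exactly as you do: Markov's inequality conditionally on $N$ gives $\P_k[|\widetilde T-\hat T|>t\mid N]\le 2e^{-\lambda t}e^{C\lambda^2 N}$ for $\lambda\in[0,\delta]$. The ingredient you were missing is a $k$‑dependent choice of $\lambda$: the paper takes $\lambda_k$ so that $\E_k[e^{C\lambda_k^2 N}\mid A]=2$, which (since the MGF of the geometric $N$ blows up at scale $r_k\asymp\beta^{-k}$) forces $\lambda_k\asymp\beta^{-k/2}$. Then $\PP[|\widetilde T-\hat T|>t]\lesssim\sum_k\alpha^k e^{-C'\beta^{-k/2}t}$, which is exactly $f$ from \equ{E2.47} with $\sqrt\beta$ in place of $\beta$, i.e.\ with $\varrho$ replaced by $2\varrho$ via \equ{E1.04}; Lemma~\ref{L2.18} then gives $O(t^{-2\varrho})$. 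The square root $\beta^{-k/2}$ is where the doubling of the exponent comes from, and it is exactly what the sub‑Gaussian/Chernoff trade‑off produces once the $N$‑average is done honestly; your ad hoc thresholds (and the variance shortcut) lose this factor.
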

\begin{proof}
By Markov's inequality and Lemma~\ref{L2.13}, there are $\delta>0$ and $C<\infty$ such that for $\lambda\in[0,\delta]$,
\begin{equation}
\label{E2.40}
\begin{aligned}
\P_k\big[|\widetilde T-\hat T|> t\Given N\big]
&\leq e^{-\lambda t}\,\E_k\big[\exp(\lambda|\widetilde T-\hat T|)\Given N\big]\\
&= e^{-\lambda t}\,\E_k\big[\exp(\lambda|T^{(1)}-\E_k[T^{(1)}]|)\big]^N\\
&\leq 2e^{-\lambda t}\,(1+C\lambda^2)^N\\
&\leq 2e^{-\lambda t}\,e^{C\lambda^2N}.
\end{aligned}
\end{equation}

We need to make a good choice for $\lambda$ to make this inequality effective.
Recall that $N$ is geometric with parameter $r_k:=\frac{\beta-1}{\beta^k-1}$ under the conditional probability $\P_k[\,\boldsymbol{\cdot}\,\given A]$.
Define
\begin{equation}
\label{E2.41}
\lambda_k:=\sqrt{\frac1C\log\left(\frac{1-r_k/2}{1-r_k}\right)},\qquad k=2,3,\ldots
\end{equation}
Then we have
\begin{equation}
\label{E2.42}
\E_k\big[e^{C\lambda_k^2N}\Given A\big]=\frac{r_k}{1-(1-r_k)e^{C\lambda_k^2}}=\frac{r_k}{1-(1-r_k/2)}=2,\quad k\geq 2,
\end{equation}
and for $l>k$,
\begin{equation}
\label{E2.43}
\E_k\big[e^{C\lambda_l^2N}\Given A\big]\leq \E_k\big[e^{C\lambda_k^2N}\Given A\big]=2.
\end{equation}
Note that
$\frac{\beta-1}{\beta^k-\beta}<\frac12$ for all $k\geq 2$. Hence (using the fact that $\log(1+x)\geq x/2$ for $x\in[0,1/2]$),
\begin{equation}
\label{E2.44}
\lambda_k=\sqrt{\frac1C\log\left(1+\frac12\frac{\beta-1}{\beta^k-\beta}\right)}
\geq\sqrt{\frac{\beta-1}{4C}}\;\beta^{-k/2}\mfa k\geq 2.
\end{equation}
Let $C':=\sqrt{\frac{\beta-1}{4C}}$.
Note that $\lambda_k\downarrow 0$ and let $k_0\in\N$ be large enough such that $\lambda_k<\delta$ for all $k\geq k_0$.

 Then (using Lemma~\ref{L2.18} with $\sqrt\beta$ instead of $\beta$ and hence $2\varrho$ instead of $\varrho$ in the last step) there is a constant $\tilde C<\infty$ such that
\begin{equation}
\label{E2.45}
\begin{aligned}
\PP\big[|\widetilde T-\hat T|> t\big]
&\leq2(1-\alpha)\sum_{k=1}^\infty\alpha^ke^{-\lambda_{k\vee k_0}t}\,\E_k\Big[e^{C\lambda_{k\vee k_0}^2N}\Given A\Big]\\
&\leq 4e^{-\lambda_{k_0} t}
+2(1-\alpha)\sum_{k=k_0+1}^\infty\alpha^ke^{-\lambda_k t}\,\E_k\Big[e^{C\lambda_k^2N}\Given A\Big]\\
&\leq 4e^{-\lambda_{k_0} t}+4(1-\alpha)\sum_{k=k_0+1}^\infty\alpha^ke^{-C'\beta^{-k/2} t}\\
&\leq 4e^{-\lambda_{k_0} t}+\tilde C t^{-2\varrho}.
\end{aligned}\end{equation}
Since $\lambda_{k_0}>0$ is a constant, the claim follows.
\end{proof}

It is still a bit inconvenient to work with $\hat T$ as the expectation of $T^{(1)}$ depends on $k$, though only slightly. The next step is to replace $\E_k[T^{(1)}]$ in the definition of $\hat T$ by its limit $\lim_{k\to\infty}\E_k[T^{(1)}]=\frac{2\beta}{\beta-1}$.
\begin{lemma}
\label{L2.15}
There is a constant $c> 0$, such that for all $t>0$, we have
\begin{equation}
\label{E2.46}
\PP\left[\left|\hat T-N\frac{2\beta}{\beta-1}\right|> t\right]\leq e^{-c\sqrt{t}}.
\end{equation}
\end{lemma}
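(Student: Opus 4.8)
The plan is to exploit the near-constancy of $\E_k[T^{(1)}]$ established in Lemma~\ref{L2.13}: by \eqref{E2.32}, the difference $\Delta_k := \frac{2\beta}{\beta-1} - \E_k[T^{(1)}]$ is nonnegative and bounded by $\frac{2\beta(\beta+1)}{\beta-1}\,k\,\beta^{-k}$ for $k\geq 2$. Since $\hat T = N\cdot\E_k[T^{(1)}]$, on the event that $k$ is fixed we have $\big|\hat T - N\frac{2\beta}{\beta-1}\big| = N\,\Delta_k \leq \frac{2\beta(\beta+1)}{\beta-1}\,k\,\beta^{-k}\,N$. So the quantity we must control is, up to the explicit constant $c_\beta := \frac{2\beta(\beta+1)}{\beta-1}$, the product $k\,\beta^{-k}\,N$, where $N$ is geometric with parameter $r_k = \frac{\beta-1}{\beta^k-1}$ under $\P_k[\,\boldsymbol{\cdot}\,\given A]$. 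Because $r_k \asymp \beta^{-k}$, the typical size of $N$ is of order $\beta^k$, so $k\beta^{-k}N$ is typically of order $k$ — which is \emph{small} — and its fluctuations are what we must bound.

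The concrete steps are as follows. First, fix $k\geq k_0$ (for $k < k_0$ the event $\{|\hat T - N\frac{2\beta}{\beta-1}| > t\}$ is empty once $t$ exceeds a constant, since $N$ is bounded in law only probabilistically — more precisely one absorbs these finitely many terms into the constant as in the proof of Lemma~\ref{L2.14}). For $k\geq k_0$, use the exponential Markov inequality: for $\mu \geq 0$,
\[
\P_k\Big[k\beta^{-k}N > s \;\Big|\; A\Big] \leq e^{-\mu s}\,\E_k\big[e^{\mu k\beta^{-k} N}\given A\big] = e^{-\mu s}\,\frac{r_k}{1-(1-r_k)e^{\mu k\beta^{-k}}},
\]
valid as long as $(1-r_k)e^{\mu k \beta^{-k}} < 1$. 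Choosing $\mu$ proportional to $\beta^k/k$ (say $\mu k\beta^{-k} = \tfrac12\log\frac{1}{1-r_k}$, mimicking \eqref{E2.41}) makes the geometric moment generating function equal to $2$, and yields a bound of the form $2\exp\!\big(-c' s\,\beta^k/k\big)$ for the conditional tail. Translating back, $\P_k[|\hat T - N\frac{2\beta}{\beta-1}| > t \given A] \leq 2\exp(-c''\,t\,\beta^k/k)$ for a constant $c''>0$ depending only on $\beta$.

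Finally, average over $k$ with weights $(1-\alpha)\alpha^k$ and estimate
\[
\PP\Big[\big|\hat T - N\tfrac{2\beta}{\beta-1}\big| > t\Big] \;\leq\; \text{(const)} + 2(1-\alpha)\sum_{k\geq k_0}\alpha^k\exp\!\big(-c''\,t\,\beta^k/k\big).
\]
The summand is largest for small $k$; for $k$ near $k_0$ the exponent is of order $-t$, giving exponential decay, while for larger $k$ the exponent $\beta^k/k \to \infty$ makes those terms negligible. The sum is dominated by its first term, of size $\exp(-c''' t)$, which is in fact \emph{much} smaller than the claimed $e^{-c\sqrt t}$ — so the stated bound holds comfortably. (One should double-check whether the exponent near $k_0$ actually degrades: if one insists on a $k$-uniform statement the $\beta^k/k$ in the exponent never gets small, so linear decay in $t$ is obtained and $e^{-c\sqrt t}$ is a weakening that suffices for the subsequent argument.) The one point requiring care — the main obstacle — is the interplay between the growing factor $\beta^k$ in the exponent and the $k\beta^{-k}$ prefactor coming from $\Delta_k$: one must verify that the prefactor decays fast enough that, after summing against $\alpha^k$, no polynomial-in-$t$ term of the type $t^{-2\varrho}$ survives (as it did in Lemma~\ref{L2.14}, where the prefactor was only $\beta^{-k/2}$ rather than $k\beta^{-k}$). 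Here the extra $\beta^{-k/2}$ of decay is exactly what upgrades the polynomial bound of Lemma~\ref{L2.14} to a stretched-exponential (indeed exponential) bound, so this is a genuine—but straightforward—improvement over the previous lemma's computation.
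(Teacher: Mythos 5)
Your proposal contains an arithmetic error in the Chernoff step, and that error leads you to a qualitatively wrong conclusion about the rate of decay.

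Let us trace the error. You set $\mu k\beta^{-k}=\tfrac12\log\frac{1}{1-r_k}$ with $r_k=\frac{\beta-1}{\beta^k-1}$. Since $\log\frac{1}{1-r_k}\sim r_k\sim(\beta-1)\beta^{-k}$, this choice gives $\mu\sim\frac{\beta-1}{2k}$, \emph{not} $\mu$ of order $\beta^k/k$. The factor $\beta^k$ you wanted to keep in $\mu$ is cancelled by the $\beta^{-k}$ hidden in $\log\frac{1}{1-r_k}$. Consequently the conditional tail bound is
\[
\P_k\big[k\beta^{-k}N>s\given A\big]\leq 2\exp\!\big(-c's/k\big),
\qquad\text{and hence}\qquad
\P_k\big[|\hat T-N\tfrac{2\beta}{\beta-1}|>t\given A\big]\leq 2\exp\!\big(-c''t/k\big),
\]
\emph{not} $2\exp(-c''t\beta^k/k)$. (This also matches what you get from the elementary bound $\P_k[N>s\given A]=(1-r_k)^s\leq e^{-r_ks}$ with $r_k\asymp\beta^{-k}$, no Chernoff machinery needed.) In the correct bound the exponent $-c''t/k$ \emph{degrades} as $k$ increases, rather than improves as in your $-c''t\beta^k/k$. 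So the sum $\sum_k\alpha^k\exp(-c''t/k)$ is not dominated by its first term: the summands near $k\sim\sqrt t$ all contribute comparably, and one must truncate at some $k_0$, bound the tail $k>k_0$ by $\alpha^{k_0+1}$, bound the head $k\leq k_0$ by $\exp(-c''t/k_0)$, and balance by choosing $k_0\sim\sqrt t$. This yields precisely $e^{-c\sqrt t}$; it is \emph{not} a "weakening" of an exponential bound, but the genuine rate. Your structural choice of discarding small $k$ and keeping large $k$ is also backwards: it is the large-$k$ part of the sum where the estimate is weak and where the $\alpha^k$ weight is needed, which is why the truncation at $k_0\sim\sqrt t$ is essential. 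With these corrections your argument collapses to the paper's proof.

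\end{document}
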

\begin{proof}
By Lemma~\ref{L2.13}, and by the fact that $\hat T=2N$ if $k=1$, we know that
$$\left|\hat T-N\frac{2\beta}{\beta-1}\right|\leq N\frac{2\beta(\beta+1)}{\beta-1}k\beta^{-k}\mfa k\geq 1.$$
Hence for any $k_0\in\N$,
$$\begin{aligned}
\PP\left[\left|\hat T-N\frac{2\beta}{\beta-1}\right|> t\right]
&\leq\sum_{k=1}^\infty(1-\alpha)\alpha^k\,\P_k\left[N\frac{2\beta(\beta+1)}{\beta-1}k\beta^{-k}>t\right]\\
&\leq \alpha^{k_0+1}+\sum_{k=1}^{k_0}(1-\alpha)\alpha^k\left(1-\frac{\beta-1}{\beta^k-1}\right)^{t\beta^kk^{-1}\frac{\beta-1}{2\beta(\beta+1)}}\\
&\leq \alpha^{k_0+1}+\exp\left(-\frac{(\beta-1)^2}{2\beta(\beta+1)}k_0^{-1}t\right).
\end{aligned}
$$
Now choose $k_0=\sqrt{t}$ to get the result.
\end{proof}

In order to see that the error terms are smaller than the main term, that is the tail of $N$, we need a lower bound for the tail of $N$. Since we give a more detailed analysis later, here we only make a very rough assertion.
\begin{lemma}
\label{L2.16}
There exists a constant $c>0$ such that
$$\PP[N>t]\geq ct^{-\varrho}\mfa t\geq1.$$
\end{lemma}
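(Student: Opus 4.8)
The plan is to produce a crude lower bound by restricting attention to a single favourable value of $k$ tied to the scale $t$, and then using the explicit distribution of $N$. Recall that under $\PP$, we have $\PP[N>t]\geq(1-\alpha)\alpha^k\,\P_k[N>t\given A]\,\P_k[A]$ for every fixed $k$; by Lemma~\ref{L2.01} the factor $\P_k[A]\geq(\beta-1)/\beta$ is bounded below uniformly, so it suffices to get a good lower bound on $\alpha^k\,\P_k[N>t\given A]$ for a well-chosen $k=k(t)$. Under $\P_k[\,\boldsymbol{\cdot}\,\given A]$, the random variable $N$ is geometric with parameter $r_k=\frac{\beta-1}{\beta^k-1}$, so $\P_k[N>t\given A]=(1-r_k)^{\lfloor t\rfloor}\geq(1-r_k)^{t}$. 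Since $r_k\leq(\beta-1)\beta^{-k}\cdot\frac{1}{1-\beta^{-k}}\leq 2(\beta-1)\beta^{-k}$ for $k$ large, and $\log(1-x)\geq -2x$ for small $x$, we get $(1-r_k)^t\geq\exp(-4(\beta-1)\beta^{-k}t)$ once $k$ is large enough.

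Next I would choose $k=k(t)$ so that $\beta^{-k}t$ is of constant order, i.e.\ take $k(t):=\big\lceil\log_\beta t\big\rceil$. Then $\beta^{-k(t)}t\in(\beta^{-1},1]$, so the factor $\exp(-4(\beta-1)\beta^{-k(t)}t)\geq\exp(-4(\beta-1))=:c_0>0$ is bounded below by a positive constant independent of $t$. On the other hand, with this choice $\alpha^{k(t)}\geq\alpha^{\log_\beta t+1}=\alpha\cdot\alpha^{\log_\beta t}=\alpha\,t^{\log_\beta\alpha}=\alpha\,t^{-\varrho}$, using the definition \equ{E1.04} of $\varrho$. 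Putting the pieces together,
\[
\PP[N>t]\;\geq\;(1-\alpha)\,\alpha^{k(t)}\,\P_{k(t)}[A]\,\P_{k(t)}[N>t\given A]\;\geq\;(1-\alpha)\,\alpha\,t^{-\varrho}\cdot\frac{\beta-1}{\beta}\cdot c_0,
\]
which is the desired bound with $c:=(1-\alpha)\alpha c_0(\beta-1)/\beta$, valid for all $t\geq t_0$ for some $t_0$ (so that $k(t)$ is large enough for the elementary estimates on $r_k$), and then for all $t\geq1$ after decreasing $c$ if necessary, since $\PP[N>t]>0$ for every fixed $t$.

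The argument is essentially a one-line second-moment-free lower bound, so there is no real obstacle; the only point requiring mild care is making the elementary inequalities $r_k\leq 2(\beta-1)\beta^{-k}$ and $\log(1-r_k)\geq -2r_k$ legitimate, which forces the restriction to $k$ large, hence to $t$ large, and then the extension to all $t\geq1$ by shrinking the constant. One could equally well phrase this without introducing $c_0$ explicitly, simply noting that $\beta^{-k(t)}t$ stays bounded so the exponential factor stays bounded below.
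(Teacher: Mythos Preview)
Your argument is correct and follows essentially the same approach as the paper's own proof: pick a single $k$ with $\beta^k$ comparable to $t$, use $\P_k[A]\geq(\beta-1)/\beta$ from Lemma~\ref{L2.01}, note that $N$ is geometric under $\P_k[\ARG\given A]$ so its tail is $(1-r_k)^{\text{something}}$, and observe that $\alpha^{k}\asymp t^{-\varrho}$. The paper does exactly this, only more tersely (it simply writes $(1-\frac{\beta-1}{\beta^k-1})^t\geq\mathrm{const.}$ without spelling out the elementary inequalities you make explicit).
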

\begin{Proof}
For $t\in[1,\beta^2]$, the statement holds with $c=\P[N>\beta^2]$. Now assume $t\geq\beta^2$ and let $c=\frac{\beta-1}{\beta}(1-\alpha)e^{-2\beta^2}$.
Let $k\in\N$, $k\geq2$ be such that $\beta^k\leq t\leq \beta^{k+1}$. Then (recall Lemma~\ref{L2.01} and note that $1-x\geq e^{-2x}$ for $x\in[0,1/2]$)
\begin{align}
\PP[N>t]&\geq \frac{\beta-1}{\beta}\,\PP[N>t\given A]\nonumber\\
&\geq \frac{\beta-1}{\beta}(1-\alpha)\alpha^k\left(1-\frac{\beta-1}{\beta^k-1}\right)^t\nonumber\\
&\geq \frac{\beta-1}{\beta}(1-\alpha)\exp\left(-2\frac{\beta-1}{\beta^k-1}\beta^{k+1}\right)\alpha^k\nonumber
\geq c\,\alpha^k\geq c\,t^{-\varrho}.
\eope
\end{align}
\end{Proof}

We summarize the above discussion in the following proposition.
\begin{proposition}
\label{P2.17}
We have
$$\lim_{t\to\infty}\frac{\PP\big[N>\frac{\beta-1}{2\beta}t \big]}{\PP[T>t]}=1.$$
\end{proposition}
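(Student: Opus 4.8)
The plan is to assemble Proposition~\ref{P2.17} from the chain of approximation lemmas already established, using a sandwich argument that transfers tail asymptotics through the decomposition $T=T_{\mathrm{in}}+T_{\mathrm{exc}}+T_{\mathrm{out}}$ and the successive replacements $T_{\mathrm{exc}}\given A \deq \widetilde T \approx \hat T \approx N\cdot\frac{2\beta}{\beta-1}$. The key observation that makes everything work is that by Lemma~\ref{L2.16} the quantity $\PP[N>s]$ is of polynomial order $s^{-\varrho}$, whereas every error term accumulated along the way (from Lemma~\ref{L2.03}, Lemma~\ref{L2.04}, Lemma~\ref{L2.05}, Lemma~\ref{L2.14} and Lemma~\ref{L2.15}) is either exponentially small in $t$, stretched-exponentially small in $t$, or of order $t^{-2\varrho}$, hence $o(t^{-\varrho})$. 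So all errors are negligible relative to the main term, and the claim reduces to a bookkeeping exercise with $\varepsilon$-room.

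First I would fix $\varepsilon>0$ small and write, for the upper bound,
\[
\PP[T>t]\;\le\;\PP[T>t\given A^c]+\PP[A]\,\PP\big[T_{\mathrm{in}}+T_{\mathrm{exc}}+T_{\mathrm{out}}>t\given A\big].
\]
On the event $A$, if $T_{\mathrm{exc}}\le t(1-2\varepsilon)$ and $T>t$, then $T_{\mathrm{in}}+T_{\mathrm{out}}>2\varepsilon t$, so splitting into these two cases and using Lemma~\ref{L2.04} and Lemma~\ref{L2.05} to bound $\PP[T_{\mathrm{in}}>\varepsilon t\given A]$ and $\PP[T_{\mathrm{out}}>\varepsilon t\given A]$ by exponentials, together with Lemma~\ref{L2.03} for the $A^c$ part, gives
\[
\PP[T>t]\;\le\;\PP\big[T_{\mathrm{exc}}>t(1-2\varepsilon)\given A\big]+e^{-c\varepsilon t}
\]
for a suitable $c>0$. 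By Lemma~\ref{L2.07}, $\PP[T_{\mathrm{exc}}>s\given A]=\PP[\widetilde T>s]$ (after integrating over $k$). Then I would peel off the two approximation steps: Lemma~\ref{L2.14} gives $\PP[|\widetilde T-\hat T|>\varepsilon t]\le c\,t^{-2\varrho}$ and Lemma~\ref{L2.15} gives $\PP[|\hat T-N\frac{2\beta}{\beta-1}|>\varepsilon t]\le e^{-c\sqrt t}$, so
\[
\PP[\widetilde T>t(1-2\varepsilon)]\;\le\;\PP\!\Big[N\tfrac{2\beta}{\beta-1}>t(1-4\varepsilon)\Big]+c\,t^{-2\varrho}+e^{-c\sqrt t}
\;=\;\PP\!\Big[N>\tfrac{\beta-1}{2\beta}\,t(1-4\varepsilon)\Big]+o(t^{-\varrho}).
\]
The lower bound is entirely symmetric: start from $\PP[T>t]\ge\PP[A]\,\PP[T_{\mathrm{exc}}>t\given A]$ (valid since $T\ge T_{\mathrm{exc}}$ on $A$), and run the same three approximation lemmas in the reverse direction to obtain $\PP[T>t]\ge\PP[N>\frac{\beta-1}{2\beta}t(1+4\varepsilon)]-o(t^{-\varrho})$, absorbing the factor $\PP[A]\ge\frac{\beta-1}{\beta}$ — wait, that factor does not tend to $1$, so in the lower bound I must instead use the same inclusion $\PP[T>t]\ge\PP[\widetilde T>t]-(\text{errors})$ with $\widetilde T=T_{\mathrm{exc}}$ on $A$ but without conditioning, i.e.\ work with the unconditioned laws throughout as in Lemmas~\ref{L2.14}--\ref{L2.15}, which are already stated for $\PP$ rather than $\P_k[\ARG\given A]$; the conditioning on $A$ only enters via $\PP[\ARG\given A^c]$ being exponentially small, so it costs nothing. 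Combining both bounds:
\[
\frac{\PP\big[N>\frac{\beta-1}{2\beta}t(1+4\varepsilon)\big]-o(t^{-\varrho})}{\PP[T>t]}\le 1,\qquad
\PP[T>t]\le \PP\Big[N>\tfrac{\beta-1}{2\beta}t(1-4\varepsilon)\Big]+o(t^{-\varrho}).
\]
Dividing, using $\PP[T>t]\ge c\,t^{-\varrho}$ (which follows from Lemma~\ref{L2.16} together with the lower bound just derived) to control the $o(t^{-\varrho})$ terms, and then letting $\varepsilon\downarrow 0$ — here one needs that $\PP[N>s]$ varies slowly enough in the multiplicative sense that $\PP[N>s(1\pm4\varepsilon)]/\PP[N>s]\to 1$ as $\varepsilon\to0$ uniformly in $s$, which is where the polynomial-with-oscillation structure (anticipating Lemma~\ref{L2.18} / the later Mellin analysis, or at worst the crude two-sided bound $c_1s^{-\varrho}\le\PP[N>s]\le c_2 s^{-\varrho}$ suffices to make the ratio bounded and the $\limsup$/$\liminf$ pinch) — yields $\lim_{t\to\infty}\PP[N>\frac{\beta-1}{2\beta}t]/\PP[T>t]=1$.

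The main obstacle is the interchange of the two limits $t\to\infty$ and $\varepsilon\to0$ in the final squeeze: one cannot simply claim $\PP[N>\frac{\beta-1}{2\beta}t(1\pm4\varepsilon)]\sim\PP[N>\frac{\beta-1}{2\beta}t]$ because $N$ is explicitly \emph{not} regularly varying. The clean way around this is to phrase the whole argument in terms of the normalized quantity $F(t):=t^\varrho\PP[N>t]$, observe via Lemma~\ref{L2.16} and the matching upper bound that $F$ is bounded away from $0$ and $\infty$, so that $\PP[N>t(1\pm4\varepsilon)]/\PP[N>t]=(1\pm4\varepsilon)^{-\varrho}F(t(1\pm4\varepsilon))/F(t)$ stays within a factor $(1\pm4\varepsilon)^{-\varrho}\cdot(\sup F/\inf F)^{\pm1}$ of $1$; taking $\limsup$ and $\liminf$ in $t$ first and only then $\varepsilon\to0$ forces the $\limsup$ and $\liminf$ of $t^\varrho\PP[T>t]/(t^\varrho\PP[N>\frac{\beta-1}{2\beta}t])\cdot(\frac{\beta-1}{2\beta})^{\varrho}$-type ratios to coincide. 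Since the statement only asserts the ratio tends to $1$ (not regular variation), this soft argument is in fact exactly enough, and no information about the oscillations of $F$ beyond boundedness is required at this stage — the fine structure is deferred to the Mellin-transform computation that produces $g$.
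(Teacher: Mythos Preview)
Your overall strategy is exactly the paper's: chain Lemmas~\ref{L2.03}--\ref{L2.05}, \ref{L2.07}, \ref{L2.14}, \ref{L2.15} and use Lemma~\ref{L2.16} to see that all error terms are $o(t^{-\varrho})$ and hence negligible against the main term. The sandwich you set up is correct and the bookkeeping with $T_{\mathrm{in}}$, $T_{\mathrm{exc}}$, $T_{\mathrm{out}}$ is fine.

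However, there is a genuine gap in your final paragraph. You correctly identify the obstacle: after the sandwich you are left with
\[
\PP\Big[N>\tfrac{\beta-1}{2\beta}t(1+4\varepsilon)\Big]-o(t^{-\varrho})
\;\le\;\PP[T>t]\;\le\;
\PP\Big[N>\tfrac{\beta-1}{2\beta}t(1-4\varepsilon)\Big]+o(t^{-\varrho}),
\]
and since $N$ is not regularly varying you cannot simply let $\varepsilon\to0$. But your proposed cure---``boundedness of $F(t)=t^\varrho\PP[N>t]$ is exactly enough''---is false. If one only knows $0<\inf F\le\sup F<\infty$, then $\limsup_s F(s(1-4\varepsilon))/F(s)$ can be as large as $\sup F/\inf F$ for \emph{every} $\varepsilon>0$ (imagine $F$ bounded but with oscillations that are not slow on the logarithmic scale). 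Taking $\limsup$ in $t$ and then $\varepsilon\to0$ therefore only yields $\limsup \PP[T>t]/\PP[N>\tfrac{\beta-1}{2\beta}t]\le \sup F/\inf F$, not $1$.

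What actually closes the argument is to let $\varepsilon$ depend on $t$, say $\varepsilon(t)=t^{-\delta}$ with $0<\delta<1/2$. The error bounds in Lemmas~\ref{L2.14} and~\ref{L2.15} are quantitative ($t^{-2\varrho}$ and $e^{-c\sqrt t}$), so $\PP[|\widetilde T-\hat T|>\varepsilon(t)t]$ and $\PP[|\hat T-\tfrac{2\beta}{\beta-1}N|>\varepsilon(t)t]$ remain $o(t^{-\varrho})$. One then needs the local estimate
\[
\PP\big[s(1-\varepsilon)<N\le s\big]\;\le\;C\,\varepsilon\,s^{-\varrho},
\]
uniformly in $s$, which follows by writing $\P_k[s(1-\varepsilon)<N\le s\given A]\le r_k\,\varepsilon s\,(1-r_k)^{s(1-\varepsilon)}$ and applying the same computation as in Lemma~\ref{L2.18} with $\alpha$ replaced by $\alpha/\beta$ (hence $\varrho$ by $\varrho+1$) to the sum $\sum_k(1-\alpha)\alpha^k r_k(1-r_k)^{s(1-\varepsilon)}$. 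With $\varepsilon=\varepsilon(t)\to0$ this gives $\PP[N>s(1\pm\varepsilon(t))]=\PP[N>s]+o(s^{-\varrho})$, and the squeeze concludes. This extra Lipschitz-type input is precisely the piece of information beyond boundedness that your soft argument lacks; the paper's terse proof hides it behind the phrase ``the tails coincide in our scale.''
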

\begin{proof}
By Lemma~\ref{L2.15}, the tails of $\frac{2\beta}{\beta-1}N$ and $\hat T$ coincide in our scale, given by
Lemma~\ref{L2.16}.
 By Lemma~\ref{L2.14}, the tails of $\widetilde T$ and $\hat T$ coincide. Finally, by Lemmas~\ref{L2.04}, \ref{L2.05} and \ref{L2.07} the tails of $T$ and $\widetilde T$ coincide.
\end{proof}

\subsection{The tail of a geometric random variable with random parameter}
\label{S2.3}

In order to compute the tail of $N$, it is convenient to replace the geometrically distributed random variable with parameter $\frac{\beta-1}{\beta^k-1}$ by an exponentially distributed random variable $N'$ with parameter $\beta^{-k}$. Note that we neglected the factor $\beta-1$ and we will re-introduce it by a scaling of $t$.
The tail of $N'$ is given by
\begin{equation}
\label{E2.47}
\PP[N'>t]=f(t):=\sum_{k=0}^\infty (1-\alpha)\alpha^k\exp\big(-\beta^{-k} t\big),\qquad t> 0.
\end{equation}
\begin{lemma}
\label{L2.18}
There are constants $0<C_1<C_2<\infty$ such that
$$C_1t^{-\varrho}\leq f(t)\leq C_2t^{-\varrho}\mfa t>1.$$
\end{lemma}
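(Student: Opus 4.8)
The plan is to exploit the identity $\alpha=\beta^{-\varrho}$, so that $\alpha^k=\beta^{-\varrho k}$, together with the observation that in the sum defining $f(t)$ the factor $\exp(-\beta^{-k}t)$ undergoes a sharp transition: it is essentially $0$ as long as $\beta^k\ll t$ and essentially $1$ once $\beta^k\gg t$. Concretely, for $t>1$ I would set $k_0=k_0(t):=\lfloor \log t/\log\beta\rfloor$, so that $\beta^{k_0}\le t<\beta^{k_0+1}$ (note $t>1$ is exactly what guarantees $k_0\ge 0$), and split $f(t)=(1-\alpha)\sum_{k<k_0}\alpha^k e^{-\beta^{-k}t}+(1-\alpha)\sum_{k\ge k_0}\alpha^k e^{-\beta^{-k}t}$. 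Raising the inequalities $\beta^{k_0}\le t<\beta^{k_0+1}$ to the power $-\varrho<0$ gives $t^{-\varrho}\le\alpha^{k_0}<\beta^{\varrho}\,t^{-\varrho}$, which is the normalisation producing the exponent $t^{-\varrho}$ in both bounds.

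For the lower bound I would discard every term except $k=k_0$. Since $\beta^{-k_0}t<\beta$ we have $\exp(-\beta^{-k_0}t)>e^{-\beta}$, hence $f(t)\ge(1-\alpha)\alpha^{k_0}e^{-\beta}\ge(1-\alpha)e^{-\beta}\,t^{-\varrho}$, so one may take $C_1:=(1-\alpha)e^{-\beta}$.

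For the upper bound I would estimate the two blocks separately. In the block $k\ge k_0$, bound $\exp(-\beta^{-k}t)\le 1$ and sum the geometric series: $(1-\alpha)\sum_{k\ge k_0}\alpha^k=\alpha^{k_0}\le\beta^{\varrho}t^{-\varrho}$. In the block $k<k_0$, write $j:=k_0-k\ge 1$; then $\beta^{-k}t\ge\beta^{-k}\beta^{k_0}=\beta^{j}$, so $\exp(-\beta^{-k}t)\le e^{-\beta^{j}}$, while $\alpha^k=\alpha^{k_0}\alpha^{-j}=\alpha^{k_0}\beta^{\varrho j}$, whence $(1-\alpha)\sum_{k<k_0}\alpha^k e^{-\beta^{-k}t}\le(1-\alpha)\alpha^{k_0}\sum_{j\ge1}\beta^{\varrho j}e^{-\beta^{j}}$. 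The series $S:=\sum_{j\ge1}\beta^{\varrho j}e^{-\beta^{j}}$ is a finite constant because $e^{-\beta^{j}}$ decays super-exponentially in $j$ (as $\beta>1$), so this block is at most $(1-\alpha)S\beta^{\varrho}t^{-\varrho}$. Adding the two blocks yields $f(t)\le\beta^{\varrho}\bigl(1+(1-\alpha)S\bigr)t^{-\varrho}=:C_2\,t^{-\varrho}$, and $C_1<C_2$ is obvious.

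There is essentially no obstacle here beyond organising the dyadic split; the substance of the estimate is just the competition between the geometric growth $\alpha^{-j}=\beta^{\varrho j}$ and the super-exponential decay $e^{-\beta^{j}}$ on the "early" side $k<k_0$. I would remark in passing that the same function $f$ is analysed far more precisely later through its Mellin transform $\int_0^\infty f(t)t^{s-1}\,dt=\dfrac{(1-\alpha)\Gamma(s)}{1-\alpha\beta^{s}}$ (convergent for $0<\Re s<\varrho$, with poles on the line $\Re s=\varrho$), which is what eventually produces the oscillatory constant $g$; but for the crude two-sided estimate of Lemma~\ref{L2.18} the elementary splitting above is all that is needed.
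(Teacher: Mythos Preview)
Your proof is correct and follows essentially the same approach as the paper's: choose the dyadic scale $k_0$ with $\beta^{k_0}\le t<\beta^{k_0+1}$, keep a single term for the lower bound, and for the upper bound exploit that after the index shift the sum is dominated by $\alpha^{k_0}$ times a convergent constant. The only cosmetic differences are that the paper uses the monotonicity of $f$ (bounding $f(t)$ by $f(\beta^{k_0})$ or $f(\beta^{k_0+1})$) and then shifts the whole summation index to obtain a doubly-infinite majorant in one stroke, whereas you split into $k<k_0$ and $k\ge k_0$ and treat the two blocks separately.
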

\begin{Proof}
Let $k\in\N_0$ be chosen such that $\beta^{k-1}\leq t<\beta^k$. Recall that $\varrho=-\log(\alpha)/\log(\beta)$. Then
\begin{equation}
\label{E2.48}
f(t)\geq f(\beta^k)\geq (1-\alpha)\alpha^k\,e^{-1}=(1-\alpha)e^{-1}(\beta^k)^{-\varrho}\geq(1-\alpha)e^{-1}\beta^{-\varrho}\,t^{-\varrho}.
\end{equation}Let
$$C_2:=\alpha^{-1}\sum_{k=-\infty}^\infty (1-\alpha)\alpha^k\exp\big(-\beta^{-k} \big).$$
Note that  $f$ is decreasing and hence for $l\in\Z$ and $\beta^{l+1}>t\geq \beta^l$, we have
\begin{align}
f(t)\leq f(\beta^l)&=\sum_{k=0}^\infty (1-\alpha)\alpha^k\exp\big(-\beta^{l-k}\big)\nc
&=\alpha^l\sum_{k=-l}^\infty (1-\alpha)\alpha^k\exp\big(-\beta^{-k}\big)\leq C_2 \alpha^{l+1}\leq C_2\,t^{-\varrho}.
\eope
\end{align}
\end{Proof}

\begin{lemma}
\label{L2.19}
We have
\begin{equation}
\label{E2.49}\lim_{t\to\infty} \frac{\PP[N>t]}{f\big((\beta-1)t\big)}\frac{\beta}{\beta-1}=1.
\end{equation}
\end{lemma}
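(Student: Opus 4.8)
The goal is to compare the tail of the true count $N$ (a geometric random variable with parameter $r_k=\tfrac{\beta-1}{\beta^k-1}$, averaged over the geometric law of $k$) with the tail of the exponential surrogate $N'$ from \equ{E2.47}, which has parameter $\beta^{-k}$. The plan is to carry out a two-step comparison. First, I would replace the geometric random variable by an exponential one with the \emph{same} mean, that is, parameter $r_k$ rather than $\beta^{-k}$; this introduces only a multiplicative rescaling of $t$ by a bounded factor that converges to $1$ in the relevant regime. Second, I would replace $r_k=\tfrac{\beta-1}{\beta^k-1}$ by $(\beta-1)\beta^{-k}$: since $\tfrac{\beta-1}{\beta^k-1}=(\beta-1)\beta^{-k}\bigl(1+O(\beta^{-k})\bigr)$, this again only rescales $t$ by a factor tending to $1$ on the scale of large $k$ (equivalently large $t$), and the factor $(\beta-1)$ is absorbed by writing $f\bigl((\beta-1)t\bigr)$.

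More concretely, for fixed $k$ a geometric random variable $N$ with parameter $r$ satisfies $\PP_k[N>t]=(1-r)^{\lceil t\rceil}$, which for small $r$ (i.e. large $k$) behaves like $\exp(-rt)$ up to errors controlled by $r^2 t$ and by the integer part. So I would write
\[
\PP[N>t\mid A]=(1-\alpha)\sum_{k=1}^\infty \alpha^k\Bigl(1-\tfrac{\beta-1}{\beta^k-1}\Bigr)^{\lceil t\rceil},
\]
split the sum at $k$ of order $\log t/\log\beta$: for $\beta^k\ll t$ the summand is super-polynomially small and contributes nothing to the $t^{-\varrho}$ scale (cf. the estimates in Lemma~\ref{L2.16} and Lemma~\ref{L2.18}); for $\beta^k\gtrsim t$ the parameter $r_k$ is small, so $\bigl(1-r_k\bigr)^{\lceil t\rceil}=\exp\bigl(-r_k t\,(1+o(1))\bigr)$ uniformly, and $r_k=(\beta-1)\beta^{-k}(1+o(1))$ uniformly in that range. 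Substituting, the tail of $N$ given $A$ is asymptotic to $(1-\alpha)\sum_k \alpha^k\exp\bigl(-(\beta-1)\beta^{-k}t\bigr)=f((\beta-1)t)$, with the tail of the uninteresting small-$k$ terms absorbed into the $o(1)$. Finally one uses $\PP[N>t]=\P[A]\,\PP[N>t\mid A]=\tfrac{\beta-1}{\beta}\PP[N>t\mid A]\cdot(1+o(1))$, more precisely $\PP[\,\cdot\,]=\sum_k(1-\alpha)\alpha^k\P_k[A]\,\P_k[\,\cdot\mid A]$ with $\P_k[A]=\tfrac{\beta-1}{\beta-\beta^{1-k}}\to\tfrac{\beta-1}{\beta}$; again the discrepancy between $\P_k[A]$ and its limit is $O(\beta^{-k})$ and hence negligible on the surviving range. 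This produces the factor $\tfrac{\beta}{\beta-1}$ in \equ{E2.49}.

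The cleanest way to make the ``uniformly in the range $\beta^k\gtrsim t$'' statements rigorous, and to avoid chasing $\epsilon$'s by hand, is to compare the two sums directly term by term after a change of the scaling variable. For a small parameter $r$, elementary bounds give, for any $\eta>0$ and all $r\le r(\eta)$, the two-sided inequality $\exp(-(1+\eta)rt)\le (1-r)^{\lceil t\rceil}\le \exp(-(1-\eta)rt)$ for $t\ge 1$; applying this with $r=r_k$ and using $r_k\le(\beta-1)\beta^{-k}\le(\beta-1)^{-1}(\beta-1)\beta^{1-k}$ to relate $r_k t$ to $(\beta-1)\beta^{-k}t$ up to a factor $1+O(\beta^{-k})$, one sandwiches $\PP[N>t\mid A]$ between $f\bigl((1-\eta')(\beta-1)t\bigr)$ and $f\bigl((1+\eta')(\beta-1)t\bigr)$ (plus a super-polynomially small remainder from finitely many small-$k$ terms), where $\eta'\to0$ as $\eta\to 0$. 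Since $f$ is regularly varying up to bounded oscillation — indeed $f(ct)/f(t)$ stays within a bounded ratio of $c^{-\varrho}$ by Lemma~\ref{L2.18}, and more is true once the exact asymptotics of $f$ is known — letting $\eta\to0$ pins down the limit. The main obstacle is purely one of bookkeeping: making the ``$1+o(1)$'' corrections from (i) the integer ceiling $\lceil t\rceil$, (ii) the approximation $r_k\approx(\beta-1)\beta^{-k}$, and (iii) the approximation $\P_k[A]\approx\tfrac{\beta-1}{\beta}$ all \emph{uniform} over the dyadic range of $k$ that actually contributes, so that they can be pulled out of the sum; none of these is deep, but they must be organized carefully so as not to lose the constant $\tfrac{\beta}{\beta-1}$.
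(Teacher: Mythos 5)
Your overall strategy — split the sum at $\beta^k\sim t^{2/3}$, approximate $(1-r_k)^{\lceil t\rceil}$ by $\exp(-r_k t)$, replace $r_k$ by $(\beta-1)\beta^{-k}$ and $\P_k[A]$ by $\tfrac{\beta-1}{\beta}$ on the surviving range — is in the same spirit as the paper, and correctly identifies all three error sources. Where you diverge is in \emph{how} the errors are extracted. The paper never rescales the argument of $f$: it uses $\log\bigl(1-\tfrac{\beta-1}{\beta^k-1}\bigr)\ge -\beta^{-k}(\beta-1)-\beta^{2-2k}$ to write $\P_k[N>t\given A]\ge \exp\bigl(-\beta^{-k}(\beta-1)t\bigr)\exp\bigl(-\beta^{2-2k}t\bigr)$, observes that the second factor is $\ge\exp(-\beta^2 t^{-1/3})\ge 1-\beta^2 t^{-1/3}$ \emph{uniformly} over $\{k:\beta^k>t^{2/3}\}$, and therefore pulls it out of the sum as a multiplicative prefactor $1-O(t^{-1/3})$. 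The quantity inside the sum is then exactly the tail of $f\bigl((\beta-1)t\bigr)$, minus a superpolynomially small remainder.

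Your version instead pushes all three error sources into a rescaling of the time variable, producing a sandwich $f\bigl((1+\eta')(\beta-1)t\bigr)\le\PP[N>t\given A]\le f\bigl((1-\eta')(\beta-1)t\bigr)$ (up to negligible remainders), and then tries to let $\eta'\to0$. This is where there is a real gap: to conclude \equ{E2.49} from the sandwich you need
\[
\lim_{\eta\downarrow 0}\ \limsup_{s\to\infty}\Bigl|\frac{f\bigl((1\pm\eta)s\bigr)}{f(s)}-1\Bigr|=0,
\]
and this does \emph{not} follow from Lemma~\ref{L2.18}, which only gives a two-sided bound $C_1 s^{-\varrho}\le f(s)\le C_2 s^{-\varrho}$ and hence only a bounded ratio $f(cs)/f(s)\in[C_1 c^{-\varrho}/C_2,\, C_2 c^{-\varrho}/C_1]$, not one tending to $1$ as $c\to1$. (Recall that the whole point of the paper is that $f$ is \emph{not} regularly varying, so you cannot pass to the limit in $c$ the way you would for a regularly varying function.) You flag this yourself with ``more is true once the exact asymptotics of $f$ is known,'' but that leaves the step unproved; invoking Proposition~\ref{P2.20} would technically be non-circular but is heavy machinery (Mellin inversion) compared with the elementary argument the paper uses here. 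The missing statement is in fact true and provable elementarily: in $f\bigl((1+\eta)s\bigr)=(1-\alpha)\sum_k\alpha^k e^{-\beta^{-k}s}e^{-\eta\beta^{-k}s}$, the terms with $\beta^{-k}s>M$ contribute $O(e^{-M})f(s)$, while on $\beta^{-k}s\le M$ the extra factor is $\ge 1-\eta M$; optimizing $M=M(\eta)$ yields the iterated limit. If you supplied this lemma the proof would go through; as written, the last step is a genuine gap, and the paper's additive extraction of the correction (constant in $k$ over the surviving range) is both shorter and avoids the issue entirely.
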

\begin{Proof}
By Lemma~\ref{L2.16} and Lemma~\ref{L2.18}, all error terms of order $o(t^{-\varrho})$ can be neglected.
We use this first to show that the summands of $f(t)$ with $\beta^k\leq t^{2/3}$ are negligible:
\begin{equation}
\label{E2.50}
\begin{aligned}
(1-\alpha)\sum_{k:\beta^k\leq t^{2/3}} \alpha^k(1-\beta^{-k})^t
&
\leq(1-\alpha)\sum_{k:\beta^k\leq t^{2/3}} \alpha^k\exp\left(-\beta^{-k}t\right)\\
&
\leq(1-\alpha)\sum_{k=0}^\infty \alpha^k\exp\left(-t^{1/3}\right)\\
&
=\exp\left(-t^{1/3}\right).
\end{aligned}
\end{equation}
For $N$ note that
\begin{equation}
\label{E2.51}
\P_k[N>t]\leq\P_k[N>t\given A]=\left(1-\frac{\beta-1}{\beta^{k}-1}\right)^t\leq \exp\left(-\beta^{-k}(\beta-1)t\right).
\end{equation}
We use this first to show as in \equ{E2.50} that the summands of $\P[N>t]$ with $\beta^k\leq t^{2/3}$ are negligible:
\begin{equation}
\label{E2.52}
(1-\alpha)\sum_{k:\beta^k\leq t^{2/3}} \alpha^k\,\P_k[N>t]
\leq \exp\left(-((\beta-1)t)^{1/3}\right).
\end{equation}

Recall from Lemma~\ref{L2.01} that $\P_k[A]=\frac{\beta-1}{\beta-\beta^{1-k}}$. Let $\varepsilon>0$ and choose $t$ large enough such that $\P_k[A]\leq(1+\varepsilon)\frac{\beta-1}{\beta}$ for all $k$ such that $\beta^k>t^{2/3}$. Then
\begin{equation}
\label{E2.53}
\begin{aligned}
\PP[N&>t]
\leq (1-\alpha)\sum_{k:\beta^k> t^{2/3}} \alpha^k\,\P_k[N>t]+\exp\left(-((\beta-1)t)^{1/3}\right)\\
&\leq (1+\varepsilon)\frac{\beta-1}{\beta}(1-\alpha)\sum_{k:\beta^k> t^{2/3}} \alpha^k\,\P_k[N>t\given A]+\exp\left(-((\beta-1)t)^{1/3}\right)\\
&\leq (1+\varepsilon)\frac{\beta-1}{\beta}(1-\alpha)\sum_{k=0}^\infty \alpha^k\,\exp\left(-\beta^{-k}(\beta-1)t\right)+\exp\left(-((\beta-1)t)^{1/3}\right)\\
&=(1+\varepsilon)\frac{\beta-1}{\beta}f\big((\beta-1)t\big)+\exp\left(-((\beta-1)t)^{1/3}\right).
\end{aligned}
\end{equation}
This shows
$$
\limsup_{t\to\infty} \frac{\PP[N>t]}{f\big((\beta-1)t\big)}\frac{\beta}{\beta-1}\leq1.
$$
Now we come to the complementary estimate for the $\liminf$.

Note that $\log(1-x)\geq -x-x^2$ for $x\in[0,1/2]$. For the summands of $\P[N>t]$ with $\beta^k>t^{2/3}$, and for $t\geq \beta^3$, we have $k\geq2$ (thus $\frac{\beta-1}{\beta^k-1}\leq\frac{1}{\beta+1}\leq\frac12$) and hence
$$\begin{aligned}
\log\left(1-\frac{\beta-1}{\beta^{k}-1}\right)&\geq -\frac{\beta-1}{\beta^{k}-1}-\left(\frac{\beta-1}{\beta^{k}-1}\right)^2\\
&=\frac{\beta(\beta^{k-1}-1)^2+(\beta-1)(\beta^{k-2}-1)}{\beta^{2k-2}(\beta^k-1)^2}-\frac{\beta-1}{\beta^{k}}-\beta^{2-2k}\\
&   \geq-\frac{\beta-1}{\beta^{k}}-\beta^{2-2k}.
\end{aligned}$$
We infer for $C= C(\beta)$ large enough and all $t\geq2$,
\begin{align}\nonumber
\PP[N>t]&=\PP[N>t\given A]\,\PP[A]\geq \PP[N>t\given A]\frac{\beta-1}{\beta}\nonumber\\\nonumber
&\geq\frac{\beta-1}{\beta}(1-\alpha)\sum_{k:\beta^k> t^{2/3}} \alpha^k\left(1-\frac{\beta-1}{\beta^{k}-1}\right)^t\\\nonumber
&
\geq\frac{\beta-1}{\beta}(1-\alpha)\sum_{k:\beta^k> t^{2/3}} \alpha^k\exp\left(-\beta^{-k}(\beta-1)t\right)\exp\left(-\beta^{2-2k}t\right)\\\nonumber
&
\geq\frac{\beta-1}{\beta}(1-\alpha)\sum_{k:\beta^k> t^{2/3}} \alpha^k\exp\left(-\beta^{-k}(\beta-1)t\right)\exp\left(-\beta^2t^{-1/3}\right)\\\nonumber
&
\geq\frac{\beta-1}{\beta}\left(1-\beta^2\,t^{-1/3}\right)(1-\alpha)\sum_{k:\beta^k> t^{2/3}} \alpha^k\exp\left(-\beta^{-k}(\beta-1)t\right)\\\nonumber
&
\geq\frac{\beta-1}{\beta}\left(1-\beta^2\,t^{-1/3}\right)\left(f\big((\beta-1)t\big)-\exp\left(-((\beta -1) t)^{1/3}\right)\right)\\\nonumber
&\geq\frac{\beta-1}{\beta}\left(1-C\,t^{-1/3}\right)\,f\big((\beta-1)t\big).\eope
\end{align}
\end{Proof}

\begin{remark}
Our comparison of the tails of $N'$ and $T$ in Lemma~\ref{L2.19} and Proposition~\ref{P2.17} allows to recover a result of Solomon \cite{Solomon1975} which we briefly sketch here.

Let $\nu:=\frac{2\beta}{(1-\beta)^2}$ and let $\psi$ be the Laplace transform of $\nu\,N'$, that is,
$$\psi(\lambda)=\E\left[e^{-\lambda\,\nu N'}\right],\qquad \lambda\geq0.$$
Using $f$ from \equ{E2.47} and partial integration, we get
\begin{equation}
\label{E2.Lap1}
\psi(\lambda)=1-\nu\lambda\int_0^\infty f(t)e^{-\lambda \nu t}\,dt=1-\nu(1-\alpha)\lambda\sum_{k=0}^\infty \frac{(\alpha\beta)^k}{1+\lambda\nu\beta^k}.
\end{equation}
If $\alpha\beta>1$, we get the asymptotics
\begin{equation}
\label{E2.Lap2}
\alpha^{-\ell}\left(1-\psi\big(\lambda\beta^{-\ell}\big)\right)\stackrel{\ell\to\infty}\longrightarrow \nu(1-\alpha)\lambda\sum_{k=-\infty}^\infty \frac{(\alpha\beta)^k}{1+\lambda\nu\beta^k}
\end{equation}
uniformly in $\lambda\in[1,\beta]$.
Now let $\varphi$ be the Laplace transform of $T$, that is $\varphi(\lambda)=\EE[e^{-\lambda T}]$, $\lambda\geq0$.
Using Lemma~\ref{L2.19} and Proposition~\ref{P2.17}, if $\alpha\beta>1$, it is easy to show that
\begin{equation}
\label{E2.Lap3}
\lim_{\lambda\downarrow0}\frac{1-\varphi(\lambda)}{1-\psi(\lambda)}=\frac{\beta-1}{\beta}.
\end{equation}
In fact, assume we have two probability measures $\mu_1$ and $\mu_2$ on $[0,\infty)$ and $\xi\in(0,\infty)$ such that
\begin{equation}
\label{E2.Lap4}
\lim_{t\to\infty}\frac{\mu_1((t,\infty))}{\mu_2((t,\infty))}=\xi.
\end{equation}
Denote by $\CL_1$ and $\CL_2$ the Laplace transforms of $\mu_1$ and $\mu_2$, respectively. Then

\begin{equation}
\label{E2.Lap5}
\frac{1-\CL_1(\lambda)}{1-\CL_2(\lambda)}=\frac{\int_0^\infty\mu_1((t,\infty))e^{-\lambda t}\,dt}{\int_0^\infty\mu_2((t,\infty))e^{-\lambda t}\,dt}
\longrightarrow\xi,\qquad\mbox{as } \lambda\downarrow0
\end{equation}
if $\mu_1$ (and hence $\mu_2$) have infinite first moment, that is, if
\begin{equation}
\label{E2.Lap6}
\int_0^\infty\mu_1((t,\infty))\,dt=\infty.
\end{equation}
Note that the expectation of $N'$ is infinite if and only if $\alpha\beta\geq1$. Summing up, for $\alpha\beta>1$, we have
\begin{equation}
\label{E2.Lap7}
\alpha^{-\ell}\left(1-\varphi\big(\lambda\beta^{-\ell}\big)\right)\stackrel{\ell\to\infty}\longrightarrow \nu(1-\alpha)\frac{\beta-1}{\beta}\lambda\sum_{k=-\infty}^\infty \frac{(\alpha\beta)^k}{1+\lambda\nu\beta^k}
\end{equation}
uniformly in $\lambda\in[1,\beta]$. A similar asymptotics was found already by Solomon \cite[Lemma (2.10)(ii)]{Solomon1975} for a model of random walk in a random environment on $\Z$ with a drift to the left except for geometrically placed reflection points. His asymptotics is the same as ours except for an obvious factor due to the fact that (i) Solomon's ``traps'' have size at least one while ours start at zero and (ii) our random walk has a positive chance to exit the trap without reaching the bottom.

The usual Tauber theorems that would help to infer the tail behaviour of $T$ from the behaviour of its Laplace transform near zero assume regular variation of the tails (and the Laplace transforms) which is not the case here. Solomon's proof uses asymptotic equivalence of the Laplace transform $\psi$ to the Laplace transform $\varphi$ he is interested in, just as we did above. However, this is possible only in the case $\alpha\beta>1$ which Solomon is mainly concerned with. Our approach of comparing the tails of the approximating random variable $N'$ instead of its Laplace transform allows to deal also with the case $\alpha\beta\leq 1$.
\end{remark}

Now we come to determining the asymptotic behavior of $f(t)$ as $t\to\infty$. The following proposition completes the proof of Theorem~\ref{T1.01}.

Let $\Gamma$ denotes Euler's $\Gamma$ function. Recall that $\Gamma(a-bi)=\overline{\Gamma(a+bi)}$ for $a,b\in \R$, where the overline indicates the complex conjugate. Also let $\arg(a+bi)\in(-\pi/2,\pi/2)$ denote the angle of $a+bi$ for $a>0$ and $b\in\R$.
\begin{proposition}
\label{P2.20}
For all $\gamma>\varrho$, as $t\to\infty$, we have
\begin{equation}
\label{E2.54}
f(t)=\frac{(1-\alpha)\Gamma(\varrho)}{\log(\beta)}\,t^{-\varrho}\left[1+\sum_{\ell\in\Z,\,\ell\neq 0}\frac{1}{\Gamma(\varrho)}\,\Gamma\left(\varrho-\frac{2\pi i\,\ell}{\log(\beta)}\right)\exp\left(2\pi i\,\ell\frac{\log(t)}{\log(\beta)}\right)\right]+O(t^{-\gamma})
\end{equation}
or equivalently
\begin{equation}
\label{E2.55}
f(t)=\frac{(1-\alpha)\Gamma(\varrho)}{\log(\beta)}\,t^{-\varrho}\left[1+\sum_{\ell=1}^\infty c_\ell\cos\left(2\pi \ell\frac{\log(t)}{\log(\beta)}-d_\ell\right)\right]+O(t^{-\gamma})
\end{equation}
with
\begin{equation}
\label{E2.56}
c_\ell=2\frac{\left|\Gamma\left(\varrho+\frac{2\pi i\,\ell}{\log(\beta)}\right)\right|}{\Gamma(\varrho)}\mbu
d_\ell=\arg\left(\Gamma\left(\varrho+\frac{2\pi i\,\ell}{\log(\beta)}\right)\right).
\end{equation}
\end{proposition}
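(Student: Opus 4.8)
The plan is to compute the asymptotics of
$$f(t)=\sum_{k=0}^\infty (1-\alpha)\alpha^k\exp(-\beta^{-k}t)$$
by the Mellin transform method, which is the standard tool for series of this "harmonic sum" type whose summands are dilations $g(\beta^{-k}t)$ of a fixed profile $g(x)=e^{-x}$. First I would recall that the Mellin transform of $e^{-x}$ is $\int_0^\infty x^{s-1}e^{-x}\,dx=\Gamma(s)$, valid for $\Re(s)>0$, and that the Mellin transform of $f$ is therefore
$$\tilde f(s)=\int_0^\infty t^{s-1}f(t)\,dt=(1-\alpha)\sum_{k=0}^\infty \alpha^k\beta^{-ks}\,\Gamma(s)=\frac{(1-\alpha)\Gamma(s)}{1-\alpha\beta^{-s}},$$
the geometric series converging for $\Re(s)>\varrho$ since $|\alpha\beta^{-s}|=\alpha\beta^{-\Re(s)}<1$ exactly there. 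Thus $\tilde f$ extends meromorphically to the whole plane, with poles coming from two sources: the poles of $\Gamma(s)$ at $s=0,-1,-2,\dots$, and the zeros of $1-\alpha\beta^{-s}$, i.e. the points $s_k=\varrho+\frac{2\pi i k}{\log\beta}$ for $k\in\Z$, which form an arithmetic progression on the vertical line $\Re(s)=\varrho$.

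Next I would apply the Mellin inversion formula
$$f(t)=\frac{1}{2\pi i}\int_{c-i\infty}^{c+i\infty}\tilde f(s)\,t^{-s}\,ds$$
for any $c>\varrho$, and shift the contour to the left, past the line $\Re(s)=\varrho$, to a line $\Re(s)=-\gamma$ (assume $\gamma$ is not a nonnegative integer, which loses no generality since $f(t)\le f(\gamma')$-type monotonicity lets us pass from one $\gamma$ to a larger one). Picking up residues, each simple pole at $s_k$ contributes $\mathrm{Res}_{s=s_k}\tilde f(s)\,t^{-s_k}$; computing the residue of the simple pole of $1/(1-\alpha\beta^{-s})$ at $s_k$ gives a factor $\frac{1}{\log\beta}$ (from $\frac{d}{ds}(1-\alpha\beta^{-s})|_{s_k}=\alpha\beta^{-s_k}\log\beta=\log\beta$), so the total residue is $\frac{(1-\alpha)\Gamma(s_k)}{\log\beta}\,t^{-s_k}$. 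Writing $t^{-s_k}=t^{-\varrho}\exp(-2\pi i k\log t/\log\beta)$ and relabeling $k\mapsto -k$ (harmless since we sum over all $k\in\Z$) produces exactly the bracketed series in \eqref{E2.54}, with the $k=0$ term giving the $1$ and the $\Gamma(\varrho)$ prefactor. The shifted integral on $\Re(s)=-\gamma$ is $O(t^{\gamma})$... — more precisely $t^{-(-\gamma)}=t^{\gamma}$ would be the wrong sign, so in fact one shifts to $\Re(s)=-\gamma$ and the remainder is $O(t^{\gamma})$; to match the claimed $O(t^{-\gamma})$ one shifts instead to $\Re(s)=\gamma'$ for $\gamma'$ between... let me restate: to get a remainder $O(t^{-\gamma})$ one shifts the contour to $\Re(s)=\gamma$ with $\gamma>\varrho$ chosen arbitrarily large but avoiding... no. The correct statement: shift leftward only past $\Re(s)=\varrho$ to some $\Re(s)=\sigma_0$ with $\sigma_0\in(\max(0,\varrho-1),\varrho)$, pick up only the poles $s_k$, and bound the remaining integral; iterating (or directly going to $\Re(s)=-\gamma$ and also picking up the harmless polynomially-growing residues at the negative integers, which are dominated once one notes $f$ itself is bounded) yields the $O(t^{-\gamma})$ error for every $\gamma$. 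Finally, converting \eqref{E2.54} to the real form \eqref{E2.55} is elementary: pair the $k$ and $-k$ terms, use $\Gamma(s_{-k})=\overline{\Gamma(s_k)}$ and $t^{-s_{-k}}=\overline{t^{-s_k}}$ (valid since $\varrho,\log t,\log\beta$ are real), so each pair equals $2\Re\big(\Gamma(s_k)t^{-s_k}\big)/\Gamma(\varrho)$ times the prefactor, which is $c_k\cos(2\pi k\log t/\log\beta-d_k)$ with $c_k,d_k$ as in \eqref{E2.56}.

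The two points requiring care — and the main obstacle — are the analytic estimates justifying the contour shift: one must verify that $\tilde f(s)=\frac{(1-\alpha)\Gamma(s)}{1-\alpha\beta^{-s}}$ decays fast enough in $|\Im(s)|$, uniformly on vertical strips, for the horizontal pieces of the rectangular contour to vanish and for the shifted vertical integral to converge and be of the stated order. This follows from the classical exponential decay of $|\Gamma(\sigma+i\tau)|$ as $|\tau|\to\infty$ (Stirling's formula: $|\Gamma(\sigma+i\tau)|\sim\sqrt{2\pi}|\tau|^{\sigma-1/2}e^{-\pi|\tau|/2}$) together with the fact that $|1-\alpha\beta^{-s}|$ is bounded away from $0$ on any vertical line avoiding the $s_k$ and is in fact bounded below by a positive constant uniformly on strips (since $|\alpha\beta^{-s}|$ is constant on vertical lines and the zeros are isolated on $\Re(s)=\varrho$); one should shift along lines $\Re(s)=\varrho\pm\epsilon$ or through the gaps. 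The second point is that the residue series in \eqref{E2.54} converges absolutely and uniformly for $t$ in compact subsets of $(0,\infty)$, which again is immediate from the exponential decay of $|\Gamma(s_k)|$ in $k$; this also justifies, a posteriori, that the $c_k$ decay rapidly as remarked after \eqref{E1.09}. Once these standard estimates are in place, the residue computation is routine and the proof is complete; combining Proposition~\ref{P2.20} with Lemma~\ref{L2.19} and Proposition~\ref{P2.17} (tracking the scaling factors $\frac{\beta-1}{2\beta}$ and $\beta-1$ carefully, which together produce the argument $\frac{(\beta-1)^2}{2\beta}t$ and the prefactor in \eqref{E1.08}) yields Theorem~\ref{T1.01}.
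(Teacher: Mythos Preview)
Your approach---Mellin transform, contour shift, residues at the vertical line of poles---is exactly the paper's. But a sign slip propagates through your computation and is the source of all the back-and-forth about which way to shift. The Mellin transform of $t\mapsto e^{-\beta^{-k}t}$ is $(\beta^{-k})^{-s}\Gamma(s)=\beta^{ks}\Gamma(s)$, not $\beta^{-ks}\Gamma(s)$; hence
\[
\tilde f(s)=\frac{(1-\alpha)\Gamma(s)}{1-\alpha\beta^{s}},
\]
with fundamental strip $0<\Re(s)<\varrho$ (the geometric series $\sum(\alpha\beta^s)^k$ converges precisely for $\Re(s)<\varrho$, and $\Gamma$ requires $\Re(s)>0$). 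With your formula $1-\alpha\beta^{-s}$ the zeros would sit on $\Re(s)=-\varrho$, not $\varrho$, which is why your pole locations and convergence strip are mutually inconsistent.

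Once the sign is corrected, the contour shift is unambiguous and your confusion dissolves: the inversion formula holds on $\Re(s)=\eta\in(0,\varrho)$, and to get decay as $t\to\infty$ one shifts \emph{right} to $\Re(s)=\gamma>\varrho$, crossing the poles $\chi_k=\varrho+2\pi ik/\log\beta$ and leaving a remainder integral bounded by $t^{-\gamma}\int_{-\infty}^{\infty}|\Gamma(\gamma+ir)|\,dr/(\alpha\beta^\gamma-1)=O(t^{-\gamma})$. The poles of $\Gamma$ at the nonpositive integers are never crossed and play no role. This is precisely what the paper does (it shifts along horizontal segments at heights $R_k=(2k+1)\pi/\log\beta$ to avoid the poles, and uses the exponential decay of $|\Gamma|$ in the imaginary direction to kill those segments). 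Your treatment of the conversion \eqref{E2.54}$\to$\eqref{E2.55} and the analytic justifications (Stirling decay of $\Gamma$, denominator bounded away from zero off the pole line) are fine.
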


\begin{proof}
The proof of \equ{E2.54} uses Mellin transforms and follows the strategy outlined in \cite[Example 12]{FlajoletGourdonDumas1995}. We define the Mellin transform of $f$ by
\begin{equation}
\label{E2.57}
f^*(z):=\int_0^\infty t^{z-1}f(t)\,dt,\qquad z\in\C,\,\Re(z)\in(0,\rho).
\end{equation}
An explicit computation shows that the integral converges for $z$ in the strip $\Re(z)\in(0,\rho)$ and equals
\begin{equation}
\label{E2.58}
f^*(z)=\frac{\Gamma(z)(1- \alpha)}{1-\alpha\beta^z}.
\end{equation}
That is, $f^*$ is holomorphic for $\Re(z)\in(0,\rho)$ and can be uniquely extended to a meromorphic function in $\C$ with poles in the nonpositive integers and in $\chi_\ell:=\varrho+2\pi i\ell/\log(\beta)$, see Figure~\ref{F07.1}. Let
$$\sum_{n=-\infty}^\infty a_{\ell,n}(z-\chi_\ell)^n$$
be the Laurent series of $f^*(z)$ around the singularity at $\chi_\ell$. Then
\begin{equation}
\label{E2.59}
a_{\ell,-1}= -\frac{\Gamma(\chi_\ell)}{\log(\beta)}\,(1-\alpha).
\end{equation}
\begin{figure}[ht]
\centerline{
\begin{tikzpicture}[scale=1.0]
\def\myrad{0.10cm}
\draw[-{Stealth[scale=1]},color=blue, line width = 0.4mm] (0.7,-3.25) -- (0.7, 3.25);
\draw[-{Stealth[scale=1]},color=blue, line width = 0.4mm] (2.8,-3.25) -- (0.7,-3.25);
\draw[-{Stealth[scale=1]},color=blue, line width = 0.4mm] (0.7, 3.25) -- (2.8, 3.25);
\draw[-{Stealth[scale=1]},color=blue, line width = 0.4mm] (2.8, 3.25) -- (2.8,-3.25);
\draw[ -{Stealth[scale=1.5]}, line width=1pt] (-4.80, 0.00 ) -- ( 4.80, 0.00 );
\draw[ -{Stealth[scale=1.5]}, line width=1pt] ( 0.00,-4.80 ) -- ( 0.00, 4.80 );
\draw[color=black!60, line width = 0.2mm, dashed] (1.5,-4.4) -- (1.5,4.4);
\draw (-0.2 , -0.1 ) node[anchor=north]{$0$};
\draw (-1.05 , -0.1 ) node[anchor=north]{$-1$};
\draw (-2.05 , -0.1 ) node[anchor=north]{$-2$};
\draw (-3.05 , -0.1 ) node[anchor=north]{$-3$};
\draw (-4.05 , -0.1 ) node[anchor=north]{$-4$};
\draw (1.8, +0.5 ) node[anchor=north]{$\chi_0$};
\draw (2.05, -1.05) node[anchor=north]{$\chi_{-1}$};
\draw (2.05, -2.35) node[anchor=north]{$\chi_{-2}$};
\draw (2.05, -3.65) node[anchor=north]{$\chi_{-3}$};
\draw (2.05,  1.55) node[anchor=north]{$\chi_1$};
\draw (2.05,  2.85) node[anchor=north]{$\chi_2$};
\draw (2.05,  4.15) node[anchor=north]{$\chi_3$};
\draw (1.7, -0.1 ) node[anchor=north]{$\varrho$};
\draw (3.0, -0.1 ) node[anchor=north]{$\gamma$};
\draw (0.9, -0.1 ) node[anchor=north]{$\eta$};
\draw (0.8, 3.85 ) node[anchor=north]{$\eta+iR_2$};
\draw (3.1, 3.85 ) node[anchor=north]{$\gamma+iR_2$};
\draw (0.8,-3.93 ) node[anchor=south]{$\eta-iR_2$};
\draw (3.1,-3.93 ) node[anchor=south]{$\gamma-iR_2$};
\shadedraw (0,0) circle(\myrad) ;
\shadedraw (-1,0) circle(\myrad);
\shadedraw (-2,0) circle(\myrad);
\shadedraw (-3,0) circle(\myrad);
\shadedraw (-4,0) circle(\myrad);
\shadedraw (1.5, 0) circle(\myrad);
\shadedraw (1.5,-1.3) circle(\myrad);
\shadedraw (1.5, 1.3) circle(\myrad);
\shadedraw (1.5,-2.6) circle(\myrad);
\shadedraw (1.5, 2.6) circle(\myrad);
\shadedraw (1.5,-3.9) circle(\myrad);
\shadedraw (1.5, 3.9) circle(\myrad);
\end{tikzpicture}
}\caption{Complex plane with the singularities of $f^\ast$ and the integration path.}
\label{F07.1}
\end{figure}
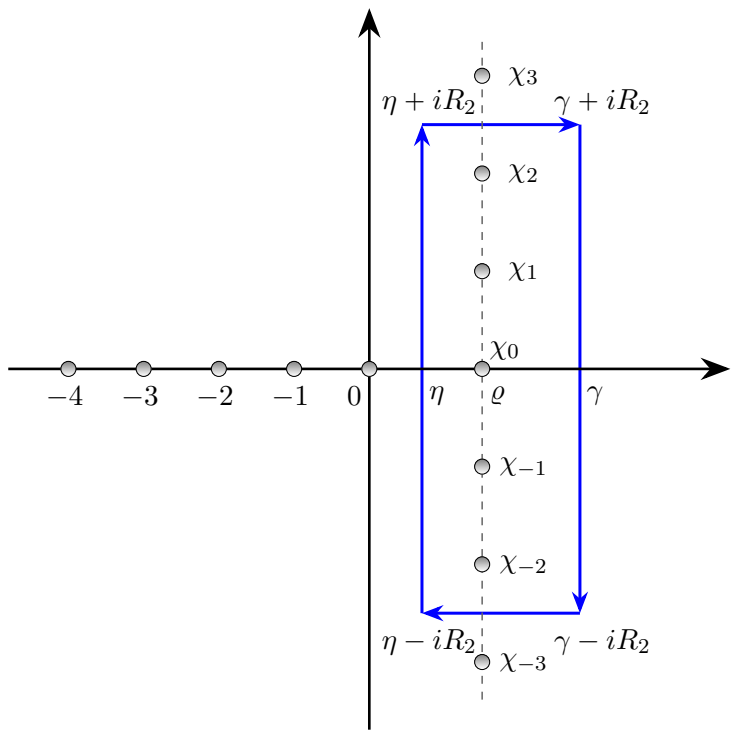
Fix an $\eta\in(0,\varrho)$. The inversion formula for Mellin transforms (see \cite{FlajoletGourdonDumas1995}) gives
\begin{equation}
\label{E2.60}
f(t)=\frac{1}{2\pi i}\int_{\eta-\infty i}^{\eta+\infty i}f^*(z)t^{-z}\,dz.
\end{equation}
Fix some $\gamma>\varrho$. We can approximate the integral by the finite integrals
\begin{equation}
\label{E2.61}
f(t)=\frac{1}{2\pi i}\int_{\eta-R_\ell i}^{\eta+R_\ell i}f^*(z)t^{-z}\,dz
\end{equation}
where $R_\ell=(2\ell+1)\pi/\log(\beta)$. We compute this integral using residue calculus for the path consisting of the four pieces $[\eta-R_\ell i,\eta+R_\ell i]$, $[\eta+R_\ell i,\gamma+R_\ell i]$, $[\gamma+R_\ell i,\gamma-R_\ell i]$ and $[\gamma-R_\ell i,\eta-R_\ell i]$. Note that the horizontal paths do not hit the poles and hence the denominator of $f^*$ is bounded away from $0$ while the modulus of the $\Gamma$ function decreases very quickly with $\ell$. Thus these integrals can be neglected. The integral along the second vertical piece can be estimated by
\begin{equation}
\label{E2.62}
\left|\int_{\gamma-R_\ell i}^{\gamma+R_\ell i}f^*(z)t^{-z}\,dz\right|\leq t^{-\gamma}\frac{1}{\alpha\beta^\gamma-1}\int_{-\infty}^\infty |\Gamma(\gamma +ir)|\,dr.
\end{equation}
As we integrate clockwise, $f(t)$ is minus the sum of the residues in $(\chi_\ell)_{\ell\in\Z}$ plus the $O(t^{-\gamma})$ term. According to \equ{E2.59} these residues are $t^{-\chi_\ell}a_{\ell,-1}=-t^{-\chi_\ell}\Gamma(\chi_\ell)\frac{1-\alpha}{\log(\beta)}$. Concluding, we get \equ{E2.54}.
\end{proof}

Note that while \equ{E2.54} is true for all values of $\gamma$, the constant in the term $O(t^{-\gamma})$ in \equ{E2.54} is of order $\Gamma(\gamma)$, see \equ{E2.62} and thus increases quickly with $\gamma$.

\subsection*{Acknowledgment} We would like to thank our colleague Duco van Straten from Johannes Gutenberg University Mainz for bringing the Mellin transformation to our attention. We would also like to thank the anonymous referees for their extremely careful reading and their very helpful suggestions.

%

\end{document}